\newcounter{braid}
\newcounter{strands}
\DeclareMathAlphabet{\bsf}{OT1}{cmss}{bx}{n}
\def\cross{%
  \@ifnextchar^{\message{Got sup}\cross@sup}{\cross@sub}}
\def\cross@sup^#1_#2{\render@cross{#2}{#1}}
\def\cross@sub_#1{\@ifnextchar^{\cross@@sub{#1}}{\render@cross{#1}{1}}}
\def\cross@@sub#1^#2{\render@cross{#1}{#2}}
\def\render@cross#1#2{
  \def\strand{#1}
  \def\crossing{#2}
  \pgfmathsetmacro{\cross@y}{-\value{braid}*\braid@h}
  \pgfmathtruncatemacro{\nextstrand}{#1+1}
  \foreach \thread in {1,...,\value{strands}}
  {
    \pgfmathsetmacro{\strand@x}{\thread * \braid@w}
    \ifnum\thread=\strand
    \pgfmathsetmacro{\over@x}{\strand * \braid@w + .5*(1 - \crossing) * \braid@w}
    \pgfmathsetmacro{\under@x}{\strand * \braid@w + .5*(1 + \crossing) * \braid@w}
    \draw[braid] \pgfkeysvalueof{/tikz/braid start} +(\under@x pt,\cross@y pt) to[out=-90,in=90] +(\over@x pt,\cross@y pt -\braid@h);
    \draw[braid] \pgfkeysvalueof{/tikz/braid start} +(\over@x pt,\cross@y pt) to[out=-90,in=90] +(\under@x pt,\cross@y pt -\braid@h);
    \else
    \ifnum\thread=\nextstrand
    \else
     \draw[braid] \pgfkeysvalueof{/tikz/braid start} ++(\strand@x pt,\cross@y pt) -- ++(0,-\braid@h);
    \fi
   \fi
  }
  \stepcounter{braid}
}
\tikzset{braid/.style={double=\pgfkeysvalueof{/tikz/braid colour},double distance=1pt,line width=2pt,white}}
\newcommand{\braid}[2][]{%
  \begingroup
  \pgfkeys{/tikz/strands=2}
  \tikzset{#1}
  \pgfkeysgetvalue{/tikz/braid width}{\braid@w}
  \pgfkeysgetvalue{/tikz/braid height}{\braid@h}
  \setcounter{braid}{0}
  \let\sigma=\cross
  #2
  \endgroup
}
\newtheorem{theorem}{Theorem}
\newtheorem{lemma}[theorem]{Lemma}
\newtheorem{corollary}[theorem]{Corollary}
\def\Z{\mathbb{Z}}
\def\calA{\mathcal{A}}
\def\Pi{\mathbb{P}^{\infty}}
\def\md{\mathcal{D}}
\def\qed{\hfill$\square$\medskip}
\def\Zpk{\mathbb{Z}/p^{k}}
\def\Zpk1{\mathbb{Z}/p^{k-1}}
\newcommand{\rref}[1]{(\ref{#1})}
\newcommand{\beg}[2]{\begin{equation}\label{#1}#2\end{equation}}
\def\r{\rightarrow}
\def\sl2{\widetilde{SL_{2}(\Z)}}
\def\md
\def\rank{\operatorname{rank}}
\title[The super reciprocal plane]{Equivariant cohomology and the super reciprocal plane of a hyperplane arrangement}
\author{Sophie Kriz}
\begin{document}

\begin{abstract}
In this paper, we investigate certain graded-commuta-tive rings which are related to the reciprocal plane compactification of the coordinate ring of a complement of a hyperplane arrangement. We give a presentation of these rings by generators and defining relations. This presentation was used by Holler and I. Kriz \cite{h} to calculate the $\mathbb{Z}$-graded coefficients of localizations of ordinary $RO((\mathbb{Z}/p)^n)$-graded equivariant cohomology at a given set of representation spheres, and also more recently by the author \cite{Krizme} in a generalization to the case of an arbitrary finite group. We also give an interpretation of these rings in terms of
superschemes, which can be used to further illuminate their structure.

\end{abstract}

\maketitle

\vspace{3mm}
\section{Introduction}
$G$-equivariant generalized homology and cohomology theory for a compact lie group $G$ is best behaved when the (co)-homology groups are graded by elements of the real representation ring $RO(G)$. In this case (see Lewis, May, Steinberger
\cite{lms} for background), the theory enjoys many of the properties of non-equivariant (co)-homology, for example, Spanier-Whitehead duality. Explicit calculations of equivariant cohomology groups, however, are much harder than in the non-equivariant case. A telling example is the case of ``ordinary" $G$-equivariant cohomology theories, defined by Lewis, May and McClure \cite{lmc}. These theories satisfy a ``dimension axiom" in the sense that the $\mathbb{Z}$-graded part of their coefficients (i.e.
(co)-homology of a point) are zero except in dimension $0$ for all (closed) subgroups of $G$.

However, calculation of the $RO(G)$-graded coefficients of these ``ordinary" $G$-equivariant cohomology theories has been an open problem since the
1980s, and these groups carry some deep information. For example, for the ``constant" $\underline{\Z}$ Mackey functor coefficients, (which means that restrictions to subgroups are identities), a partial calculation of the $RO(G)$-graded coefficients for $G=\Z/8$ was a key ingredient in the solution by Hill, Hopkins and Ravenel \cite{km} of the Kervaire invariant $1$ problem.

The algebraic calculations made in the present paper are relevant to the ordinary $RO(G)$-graded (co)homology theory with constant $\underline{\Z/p}$ coefficients for $G=(\Z/p)^n$. We denote this theory by $H\underline{\Z/p}_{(\Z/p)^n}$. In the paper \cite{hk}, Holler and I. Kriz
calculated the ``positive" part of these coefficients, meaning the groups
\beg{positivecoeffs}{H\underline{\Z/p}_{(\Z/p)^n}^V(*)}
with $V$ an actual (not virtual) representation for $p=2$. A key ingredient in this calculation was the geometric fixed point ring
\beg{geomfixpts}{(\Phi^{(\Z/p)^n} H\underline{\Z/p})_*,}
which is the localization of the full $RO((\Z/p)^n)$-graded coefficient ring by inverting the inclusions $S^0\r S^\alpha$ for all non-trivial irreducible
representations $\alpha$ (see Tom Dieck \cite{tom} and \cite{lms}, chapter 11, Def. 9.7).

Holler and I. Kriz \cite{hk} calculated the ring \rref{geomfixpts} for $p=2$ by hand using a spectral sequence, and commented that the rings
seemed to have an unusual algebraic structure, and asked about its geometric significance. They also did not know how to complete the same computation for $p>2$, where the structure seemed much more complicated.

Answering the second question is the main purpose of the present paper. Using our main theorem (Theorem 2 below),
Holler and I. Kriz \cite{h} then generalized their calculations of the geometric fixed point coefficient ring
 \rref{geomfixpts} to $p>2$, and also answered the following more general question:

What is the structure of the $\Z$-graded coefficient ring $R_S$ of the $(\Z/p)^n$-fixed point spectrum given by localizing
$H\underline{\Z/p}_{(\Z/p)^n}$ by inverting the maps $S^0\r S^\alpha$ for a given set $S$ of
irreducible $(\Z/p)^n$-representations?

Symbolically, we may write
\beg{wedge1}{R_S= ((\bigwedge_{i=1}^m S^{\infty\alpha_i})\wedge H \underline{\Z/p})^{(\Z/p)^n}_*}
where $S=\{\alpha_1,\dots, \alpha_m\}$. 

Then, in particular, the geometric fixed point coefficient ring \rref{geomfixpts} is equal to $R_S$ where 
$$S=\{\alpha_1,\dots,\alpha_{p^n-1}\}$$ consists of all non-trivial irreducible
representations of $(\Z/p)^n$.

In \cite{hk} Theorem 2, Holler and I. Kriz proved that
\beg{geomfixptsz2}{\begin{array}{c}
(\Phi^{(\Z/2)^n} H\underline{\Z/2})_* = \\
\Z/2[t_\alpha|\alpha\in (\Z/2)^n\setminus\{0\}]/(t_\alpha t_\beta+t_\alpha t_\gamma + t_\beta t_\gamma | \alpha +\beta + \gamma =0),
\end{array}}
where $t_\alpha$ are in degree $1$. They proved this by counting the dimension of the submodule of homogeneous elements of a given degree and
matching it with a spectral sequence. But what do these relations mean?

Consider the affine space
$$\mathbb{A}_{\mathbb{F}_2}^n=\text{Spec} (\mathbb{F}_2[x_1,\dots,x_n]).$$
Then consider a set of elements $z_\alpha$ which are non-zero linear combinations of the coordinates $x_1,\dots, x_n$
with coefficients in $\mathbb{F}_2$. Such linear combinations can, in turn, be identified with equations of hyperplanes
through the origin in $\mathbb{A}_{\mathbb{F}_2}^n$. (In the case of \rref{geomfixptsz2}, all possible rational hyperplanes, as it turns out.) If we remove
these hyperplanes from $\mathbb{A}_{\mathbb{F}_2}^n$, we obtain an affine variety with coordinate ring
\beg{coordring1}{(\prod_{\alpha\in (\Z/2)^n\setminus \{0\}}z_\alpha^{-1})\mathbb{F}_2[x_1,\dots,x_n].}
The ring \rref{geomfixptsz2} is isomorphic to the subring of the ring \rref{coordring1} generated by the elements $t_\alpha=z_\alpha^{-1}.$ 
This result turned out to be known (for example, \cite{foot}, Theorem 4). In fact, the affine variety with coordinate ring
\rref{geomfixptsz2} is known as the {\em reciprocal plane} of the hyperplane arrangement $\{z_\alpha\}$ (see \cite{den}).

\vspace{5mm}

The main contribution of the present paper is finding an analog of this story for $p>2$. 
From the point of view of algebraic geometry, there is no difference:
As we already mentioned, the reciprocal plane construction is independent of characteristic.

In algebraic topology, however, when we are dealing with characteristic $p\neq 2$, coefficient rings become {\em graded-commutative}, i.e.
$$xy=(-1)^{|x||y|} yx$$
where $|x|$ denotes the degree of $x$. So to solve the structure of the rings \rref{geomfixpts}, \rref{wedge1} for $p>2$, it was
necessary to discover the {\em appropriate graded-commutative analogue} of the reciprocal plane, and to prove structure results analogous to 
\cite{foot}. This is the main result of the present paper. 

Very briefly, we consider the ring
$$\mathbb{F}_p[x_1,\dots,x_n]\otimes \Lambda_{\mathbb{F}_p}[dx_1,\dots, dx_n]$$
where $\Lambda$ denotes the exterior algebra.
In this ring, invert a set of linear combinations $z_\alpha$ of the elements $x_i$. The right ring turns out to be the subring generated by 
$t_\alpha=z_\alpha^{-1}$ and $u_\alpha=z_\alpha^{-1} dz_{\alpha}$. Topologically, the element $t_{\alpha}$
has degree 2 and the element $u_{\alpha}$ has degree 1, corresponding to the fact that we are dealing with complex, not real, representations
for $p>2$.

In this paper, I determine the structure of these subrings in a way analogous to (but more complicated than) the commutative case. Holler and I. Kriz \cite{h} then used my structure theorems to prove that these rings
are isomorphic to the rings \rref{wedge1} for $p>2$.
In a recent follow-up paper \cite{Krizme}, I also used these results to obtain a generalization to all finite groups. 
These are the main topological applications of the results of the present paper.

On the geometric side, the $Spec$ of a graded-commutative ring is a {\em superscheme} (for a survey, see \cite{west}).
In Section \ref{s5}, I develop the superscheme analog of some of the known geometric structures associated with the reciprocal plane, 
which correspond to my algebraic 
generalization to graded-commutative rings. Again, the algebraic geometry side of the story is independent of characteristic.

\vspace{5mm}
 
The present paper is organized as follows: In the next section, I give precise statements of the algebraic results of this paper.
In Section \ref{s2}, I give a proof of the main theorem and also prove that the relation ideal $K$
is also generated by the relation polynomials $P_{L,S}$ where the $L$'s are restricted to ``minimal" relations.
In Section \ref{s5}, I discuss the geometric interpretation, including the construction of the superscheme corresponding to the graded-commutative case (Theorem \ref{t30}).

\vspace{5mm}
\section{Statement of the results}\label{Statementoftheresults}

\vspace{3mm}
Following Terao \cite{terao}, consider an $n$-dimensional affine space 
$$\mathbb{A}_F^n= Spec (F[x_1,\dots, x_n])$$
over a field $F$.
Let $z_1,\dots,z_m$ be non-zero linear combinations of the coordinates
$x_1,\dots,x_n$ with coefficients in $F$.
We can think of the $z_i$'s as equations of hyperplanes in $\mathbb{A}_F^n$. Then the coordinates $t_i=z_i^{-1}$ define a morphism of affine 
varieties

$$\pi :\mathbb{A}_F^n\setminus Z(z_1\dots z_m)\rightarrow \mathbb{A}_F^m$$
where $ZI=Z(I)$ is the set of zeros of an ideal $I$. The morphism $\pi$ is an embedding if the $z_j$'s linearly span the $x_i$'s. Consider 
the Zariski closure of Im($\pi$). As we shall see, this variety is a cone, so we can speak of the corresponding projective variety.
This construction, called the reciprocal plane, has been studied extensively 
(see \cite{foot, loo03, ht03, ps06, st09, ssv11, hk11, len11}).
For a survey, see \cite{den}.

To understand this construction better, we must describe it algebraically, which will also bring us closer to the motivation of the present paper.
Algebraically, let
$$R=z_{1}^{-1}\dots z_{m}^{-1}F[x_1,\dots,x_{n}]=F[x_1,\dots, x_n][z_1^{-1},\dots, z_m^{-1}].$$
Then we have a homomorphism of rings
$$h : F[t_1,\dots, t_m]\rightarrow R$$
with $h(t_i)=z_i^{-1}$
(which is, of course, not onto).
Consider the ideal $I=\text{Ker}(h)$.
Denote $\mathcal{A}=\{z_1,\dots, z_m\},$ and put
$$R_{\mathcal{A},\mathbb{A}_F^n} = F[t_1,\dots,t_m]/I.$$
Then $\text{Spec} ( R_{\mathcal{A},\mathbb{A}_F^n})$ is, by definition, the Zariski closure of $\text{Im}(\pi)$. Also by the homomorphism theorem,
$R_{\mathcal{A}, \mathbb{A}_F^n}$ is a subring of $R$. Observe that $I$ is a prime ideal (therefore a radical) since $R$ is an integral domain,
and hence so are its subrings. Further, if the $z_i$'s generate the $x_j$'s, then
$$R=(t_1\cdot \dots \cdot t_m)^{-1} R_{\mathcal{A},\mathbb{A}_F^n}.$$
Thus, in particular, in this case $\pi$ is an open embedding of the hyperplane arrangement complement into the Zariski closure of its image.

The ideal $I$ is non-zero when there are linear dependencies among the hyperplane equations $z_i$.
Suppose, then, 
\beg{a1}{L=a_1z_{i_1}+\dots+a_kz_{i_k}=0\in F[x_1,\dots, x_n]} 
where $a_1, \dots,a_k\in F$ are not $0$, and 
$$1\leq i_1 <\dots<i_k\leq m.$$
So, in $R$, we have $\frac{a_1}{t_{i_1}}+\dots +\frac{a_k}{t_{i_k}}=0$ where $k>1$ (where, in the rest of this paper, we indentify 
$t_j=z_j^{-1}$). Thus, 
\beg{e1}
{ \frac{ a_1t_{i_2}\dots t_{i_k}+\dots +a_jt_{i_1}\dots\widehat{t_{i_j}  }\dots t_{i_k} 
+\dots +a_kt_{i_1}\dots t_{i_{k-1}} }{ t_{i_1}\dots t_{i_k}  }=0 \in R,} 
where the hat means an omitted term.

Hence, the numerator $P_L$ of the left hand side of (\ref{e1}) is in $I$.

\begin{theorem}\label{t1} (\cite{foot}, \cite{den}, (5.3)) Let $\mathcal{Z}$ be the set
of all nonzero linear relations $L$ among the hyperplane equations $z_i$. Then
\beg{e2}{I=(P_L(t_1,\dots,t_m)|L\in \mathcal{Z}),}
or in other words,
$$R_{\mathcal{A},\mathbb{A}_F^n}=F[t_1,\dots, t_m]/(P_L(t_1,\dots,t_m)|L\in \mathcal{Z}).$$
\label{t1}
\end{theorem}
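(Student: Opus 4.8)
The plan is to prove the two inclusions $(P_L \mid L \in \mathcal{Z}) \subseteq I$ and $I \subseteq (P_L \mid L \in \mathcal{Z})$ separately, with all the work going into the second. The first inclusion is already essentially done in the text: each $P_L$ was constructed as the numerator of a fraction that vanishes in $R$, so $h(P_L) = 0$ and hence $P_L \in \ker(h) = I$. For the reverse inclusion, I would first reduce to understanding the structure of $R_{\mathcal{A},\mathbb{A}_F^n}$ as a ring. The key structural observation is that a monomial $t_{j_1}\cdots t_{j_r}$ in $R$ is, after clearing, $(z_{j_1}\cdots z_{j_r})^{-1}$, so the subring $R_{\mathcal{A},\mathbb{A}_F^n} \subseteq R$ has an $F$-basis (or at least a spanning set) consisting of such inverse-products-of-hyperplane-equations, and the relations among these come precisely from rewriting one such product in terms of others — which is exactly what the linear relations $L \in \mathcal{Z}$ let you do.

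Concretely, I would set $J = (P_L \mid L \in \mathcal{Z})$ and study the quotient $F[t_1,\dots,t_m]/J$, aiming to show the induced surjection $F[t_1,\dots,t_m]/J \twoheadrightarrow R_{\mathcal{A},\mathbb{A}_F^n}$ is injective. The natural device is to produce a spanning set for $F[t_1,\dots,t_m]/J$ indexed by a combinatorially meaningful family — the "no broken circuit" monomials associated to the matroid of the arrangement $\mathcal{A}$ — and then check that the images of these monomials in $R$ are linearly independent. Independence in $R$ is the easy half: distinct square-free monomials $t_{j_1}\cdots t_{j_r}$ with supports forming independent sets of the matroid map to rational functions whose denominators are genuinely distinct products of distinct linear forms, and a short argument with partial fractions or with leading terms shows no $F$-linear combination can vanish. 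The work is in showing these monomials span modulo $J$: given any monomial, one uses a relation $P_L$ coming from a circuit contained in (or broken by) its support to rewrite it as a combination of monomials that are "smaller" in a suitable term order or in a complexity measure like "number of broken circuits contained in the support," and then induct. One must be slightly careful that the relations $P_L$ are square-free-generating enough; if a monomial has a repeated factor $t_j^2$, the relation for a circuit through $j$ still lets you lower the exponent, so a preliminary reduction handles repeated variables before invoking the no-broken-circuit combinatorics.

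The main obstacle I anticipate is the spanning/straightening step: showing that the rewriting procedure terminates and that it genuinely reduces an arbitrary monomial to an $F$-combination of no-broken-circuit monomials, rather than cycling. This requires a well-chosen partial order on monomials that is compatible with the circuit relations — the standard choice is to order by the support (as a subset of $\{1,\dots,m\}$, compared via the lex order on sorted tuples) together with total degree, and to check that each application of a $P_L$ with $L$ supported on a broken circuit $C\setminus\{\min C\} \cup \{\text{something larger}\}$ strictly decreases this order. Getting the bookkeeping right — especially keeping track of which term of $P_L$ to solve for, and verifying that solving for it only introduces strictly smaller monomials — is the delicate part; everything else (the two easy inclusions, linear independence in $R$, the reduction to square-free monomials) is routine. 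An alternative to the straightening argument, which I would keep in reserve, is to compute Hilbert series: show directly that the Hilbert series of $F[t_1,\dots,t_m]/J$ and of $R_{\mathcal{A},\mathbb{A}_F^n}$ agree (both being given by a specialization of the characteristic polynomial / Tutte polynomial of the matroid), which combined with the surjection forces equality; but this essentially repackages the same combinatorial content.
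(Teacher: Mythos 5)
First, note that the paper does not prove this theorem: it is quoted from Proudfoot--Speyer \cite{foot} (see also \cite{den}), and the paper's own work begins with the graded-commutative analogue. Your overall architecture --- show $(P_L)\subseteq I$, then show the surjection $F[t_1,\dots,t_m]/(P_L)\twoheadrightarrow R_{\mathcal{A},\mathbb{A}_F^n}$ is injective by exhibiting a spanning set of standard (no-broken-circuit) monomials and checking their images in $R$ are linearly independent, with a Hilbert-series argument in reserve --- is exactly the strategy of the cited source. However, two of the steps you label as routine are false as stated, and they are not peripheral.

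The first is the ``preliminary reduction to square-free monomials.'' The ring $R_{\mathcal{A},\mathbb{A}_F^n}$ is the coordinate ring of an irreducible variety of dimension equal to the rank of $\{z_1,\dots,z_m\}$, hence infinite-dimensional over $F$, whereas there are only finitely many square-free monomials; already for $m=n=1$, $z_1=x_1$, there are no relations at all and the powers $t_1^k$ are all independent. A circuit through $j$ lets you rewrite the broken-circuit monomial $t_{C\setminus\{\min C\}}$, not lower an exponent $t_j^2$. The correct spanning family consists of monomials whose \emph{support} contains no broken circuit but whose exponents are arbitrary. The second false step is the linear-independence claim for ``distinct square-free monomials with independent support.'' Take $z_1=x_1$, $z_2=x_2$, $z_3=x_1+x_2$: the sets $\{1,2\}$, $\{1,3\}$, $\{2,3\}$ are all independent in the matroid, yet
$$\frac{1}{z_1z_2}-\frac{1}{z_1z_3}-\frac{1}{z_2z_3}=\frac{z_3-z_2-z_1}{z_1z_2z_3}=0,$$
which is precisely the relation $P_L$ for $L=z_1+z_2-z_3$. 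Independence holds only for monomials with no-broken-circuit support (and arbitrary exponents), and proving that independence is the genuine content of the theorem --- in \cite{foot} it is obtained via a deletion--restriction induction matching Hilbert series, not a short partial-fractions observation. So the half you call easy is where the theorem actually lives; the straightening/termination half you worry about is handled, as you suspect, by a term order in which $LT(P_C)$ is the broken-circuit monomial, but it only closes the argument once the independence statement is proved for the correct (non-square-free, nbc-supported) family.
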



\begin{corollary}\label{corollaryhollerIKrizGeometricExample}(\cite{h,hk})
For $p=2$, the $\Z$-graded coefficient ring \rref{a1} of the constant $\underline{\Z/2}$-Mackey functor ordinary $(\Z/2)^n$-equivariant 
cohomology spectrum with the inclusion $S^0\rightarrow S^{\alpha_i}$  inverted where $\alpha_i$ are real
irreducible representations corresponding to the hyperplanes $z_i$ is
$$R_S = R_{\mathcal{A},\mathbb{A}_{\mathbb{F}_2}^n}.$$
\end{corollary}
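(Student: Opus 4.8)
The plan is to exhibit both sides of the asserted equality as the very same subring of a ``universal'' localized polynomial ring, and then to use Theorem \ref{t1} to rewrite that subring by generators and relations. On the algebraic side, let $\mathcal{A}=\{z_1,\dots,z_m\}$ be the arrangement in $\mathbb{A}_{\mathbb{F}_2}^n$ corresponding to $S$ (as made precise below), and set $R=\mathbb{F}_2[x_1,\dots,x_n][z_1^{-1},\dots,z_m^{-1}]$; by the homomorphism theorem $R_{\mathcal{A},\mathbb{A}_{\mathbb{F}_2}^n}$ is the subring of $R$ generated by the $t_i=z_i^{-1}$, and Theorem \ref{t1} identifies it with $\mathbb{F}_2[t_1,\dots,t_m]/(P_L\mid L\in\mathcal{Z})$. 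Thus it suffices to show that the topologically defined ring $R_S$ of \rref{wedge1} is isomorphic, via $t_i\mapsto z_i^{-1}$, to this subring of $R$.

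First I would set up the dictionary between the equivariant and the combinatorial data. For $G=(\mathbb{Z}/2)^n$ the nontrivial real irreducible representations are exactly the nontrivial characters $G\to\{\pm1\}$, i.e.\ the nonzero elements $\alpha$ of the Pontryagin dual $\widehat{G}\cong\mathbb{F}_2^n$; in coordinates such an $\alpha$ is a nonzero linear form $z_\alpha$ on $\mathbb{A}_{\mathbb{F}_2}^n$, hence a hyperplane through the origin, and over $\mathbb{F}_2$ (where $1$ is the only nonzero scalar) the map $\alpha\mapsto z_\alpha$ is a bijection onto the set of such hyperplanes. Under this, a set $S=\{\alpha_1,\dots,\alpha_m\}$ of representations becomes the arrangement $\mathcal{A}=\{z_{\alpha_1},\dots,z_{\alpha_m}\}$, inverting the inclusion $S^0\to S^{\alpha_i}$ becomes inverting $z_{\alpha_i}$ in the coordinate ring of the hyperplane-arrangement complement, and the tensor product $\chi_\alpha\otimes\chi_\beta=\chi_{\alpha+\beta}$ is reflected in the linear dependence $z_\alpha+z_\beta+z_{\alpha+\beta}=0$; so the set $\mathcal{Z}$ of linear relations among the $z_{\alpha_i}$ is precisely the set of subsets $T\subseteq S$ with $\sum_{\alpha\in T}\alpha=0$. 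When $S$ is the full set of nonzero characters one reads off \rref{geomfixptsz2}, the relevant $P_L$ being the three-term relations $t_\alpha t_\beta+t_\alpha t_\gamma+t_\beta t_\gamma$ with $\alpha+\beta+\gamma=0$, together with the fact that these already generate the whole ideal $I$.

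It then remains to check $R_S\cong R_{\mathcal{A},\mathbb{A}_{\mathbb{F}_2}^n}$. The polynomials $P_L$ visibly vanish in $R$, hence in any subring, so as soon as one knows that the classes $z_{\alpha_i}^{-1}$ generate $R_S$ there is a surjection $\mathbb{F}_2[t_1,\dots,t_m]/(P_L\mid L\in\mathcal{Z})\twoheadrightarrow R_S$, and by Theorem \ref{t1} the claim amounts to this map being an isomorphism. I expect this injectivity to be the main obstacle: it is proved by computing the dimension of each graded piece of $R_S$ — running the Holler and I.~Kriz spectral sequence attached to the cofiber sequences defining the $S^{\infty\alpha_i}$, as in \cite{hk} for the full arrangement and in \cite{h} in general — and matching it with the combinatorially computable Hilbert function of the reciprocal-plane algebra $R_{\mathcal{A},\mathbb{A}_{\mathbb{F}_2}^n}$. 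This degree-by-degree count is the technical core; the role of the present paper is to supply Theorem \ref{t1} (and, for $p>2$, its graded-commutative analogue with the extra classes $u_\alpha$), which furnishes the clean generators-and-relations target without which the comparison could not even be formulated.
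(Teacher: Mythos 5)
Your proposal is correct and takes essentially the same route as the paper, which in fact offers no independent proof of this corollary but cites \cite{h,hk} and summarizes their argument exactly as you reconstruct it: identify the nontrivial real irreducible characters of $(\Z/2)^n$ with the nonzero linear forms $z_i$ over $\mathbb{F}_2$, observe the relations $P_L$ among the inverted Euler classes, and prove injectivity by matching graded dimensions against the Holler--Kriz spectral sequence, with Theorem \ref{t1} supplying the presentation of $R_{\mathcal{A},\mathbb{A}_{\mathbb{F}_2}^n}$. The only point to tighten is that the relations $P_L$ must be verified topologically in $R_S$ (they do not follow merely from vanishing in the algebraic ring $R$ until the identification is established), but this is part of the spectral-sequence computation you already defer to \cite{hk}.
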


\vspace{3mm}

\noindent {\bf Example:} 
Formula \rref{geomfixptsz2} is a special case of Corollary \ref{corollaryhollerIKrizGeometricExample} when $\mathcal{A}$ contains all the non-zero
linear combinations of the variables $x_i$ (corresponding to all non-zero irreducible real representations of $(\Z/2)^n$).

To give a simple example of the generalization, consider $n=4$ and the hyperplanes
$$z_1 = x_1 + x_2, \;\; z_2 = x_2+x_3, \;\; z_3 = x_3+x_4,\;\; z_4 = x_1 + x_4.$$
Then the only relation among them is
$$L= z_1+z_2+z_3+z_4,$$
giving rise to
$$P_L= t_2t_3t_4+ t_1t_3t_4+t_1t_2t_4+t_1t_2t_3,$$
so we have
$$R_S = R_{\mathcal{A},\mathbb{A}_{\mathbb{F}_2}^n}= \mathbb{F}_2[t_1,t_2,t_3,t_4] / (t_2t_3t_4+ t_1t_3t_4+t_1t_2t_4+t_1t_2t_3).$$

\vspace{5mm}

For the graded-commutative case, consider
$$\Omega =F[x_1,\dots,x_n]\otimes \Lambda [dx_1,\dots,dx_n]$$
where $\Lambda$ denotes the exterior algebra over the field $F$. Then the non-zero $F$-linear combinations $z_i$ of the $x_i$'s are in the center
of $\Omega $.
Now consider 
$$T=z_1^{-1}\dots z_m^{-1}\Omega \supset \Omega.$$
This is the graded-commutative analog of the ring $R$.
We are interested in the subring $T_{\mathcal{A},\mathbb{A}_F^n}$ of $T$ generated by 
$z_1^{-1},\dots,z_m^{-1}, z_1^{-1} dz_1,\dots, z_m^{-1}dz_m$. 
Put $t_i=z_i^{-1}$and $u_i=z_i^{-1}dz_i$.
Then we have a canonical homomorphism of rings 
\beg{XiDefinitiontsus}{\psi : \Xi = F[t_1,\dots,t_m] \otimes \Lambda[u_1,\dots,u_m]\rightarrow T.} 
Let $K=\text{Ker}(\psi)$.  
By the Homomorphism Theorem, we have
$$T_{\mathcal{A},\mathbb{A}_F^n}=\Xi/K.$$


\vspace{3mm}

We want to find the generators of the ideal $K$. Recalling \rref{e2}, note that $I\subseteq K$, but in general, equality does
not arise, so we need to look for additional relations.
If $L$ is again the left hand side of \rref{a1},
then 
$$dL= a_{i_1}dz_{i_1}+\dots +a_{i_k}dz_{i_k}=0 \in T.$$
 If we multiply 
$$\begin{array}{c}
P_L=a_{i_1}t_{i_2}\dots t_{i_k}+a_{i_2}t_{i_1}\widehat {t_{i_2}}
\dots t_{i_k}+\\ \dots +a_{i_j}t_{i_1}\dots \widehat {t_{i_j}}\dots t_{i_k}+\dots +a_{i_k}t_{i_1}\dots t_{i_{k-1}} 
\end{array}$$ by
$dz_{j_1}\dots dz_{j_l}$ where 
\beg{ee2}{S=\{j_1<\dots<j_l\} \subseteq \{i_1,\dots ,i_k\},}
then some monomial summands can be expressed in terms of the $u_j$'s. 
If a monomial summand does not contain $t_{j_s}$ but does contain $dz_{j_s}$, then use $dL 
=a_{i_1}dz_{i_1}+\dots +a_{i_k}dz_{i_k}$
to eliminate $dz_{j_s}$. 
Explicitly, let
\beg{PLSDefinition}{\begin{array}{l}P_{L,S}=P_L dz_{j_1}\dots dz_{j_l}\\
-\sum _{s=1}^l t_{i_1}\dots \widehat {t_{j_s}} \dots t_{i_k} dz_{j_1}\dots \widehat {dz_{j_s}}dL\dots dz_{j_l}.\end{array}}
We have $P_{L,S} \in \Xi$. 
Note that, by definition, $P_{L,\emptyset}=P_L$.
Our main result is
\begin{theorem} \label{t30}
Let $\mathcal{Y}$ be the set of all pairs $(L,S)$ where $L$ is a linear relation among hyperplanes equations as in
\rref{a1}, and $S$ is a subset of the index set as in \rref{ee2}. Then
$$K=(P_{L,S} \; |\; (L,S)\in \mathcal{Y}).$$
In other words,
$$T_{\mathcal{A},\mathbb{A}_F^n}=\Xi /(P_{L,S} \; |\; (L,S)\in \mathcal{Y}).$$
\end{theorem}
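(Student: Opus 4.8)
The inclusion $(P_{L,S}\mid (L,S)\in\mathcal Y)\subseteq K$ is clear by construction: each $P_{L,S}$ was manufactured to lie in $\ker\psi$, since $P_L$ maps to $0$ in $R\subset T$ and $dL$ maps to $0$ in $T$, so the correction terms in \rref{PLSDefinition} are honest elements of $\Xi$ mapping to $0$. The content is the reverse inclusion $K\subseteq (P_{L,S})$. The plan is to exploit the bigrading of $\Xi=F[t_1,\dots,t_m]\otimes\Lambda[u_1,\dots,u_m]$ by exterior degree in the $u_i$'s, and to run an induction on this exterior degree, using Theorem \ref{t1} as the base case (exterior degree $0$, where $K$ meets the polynomial part exactly in $I$).

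First I would set up coordinates carefully. Write an arbitrary element $f\in\Xi$ in the form $f=\sum_{S'} f_{S'}\cdot u_{S'}$ where $S'$ ranges over subsets of $\{1,\dots,m\}$, $u_{S'}$ is the corresponding wedge of the $u_j$'s, and $f_{S'}\in F[t_1,\dots,t_m]$. I want to understand $\psi$ on such elements: $\psi(u_j)=z_j^{-1}dz_j$, and since $dz_j=\sum_i c_{ji}\,dx_i$ is a central-coefficient linear combination of the $dx_i$, the images $\psi(u_{S'})$ live in $\Lambda[dx_1,\dots,dx_n]\otimes(\text{fractions in the }x_i)$; the exterior degree in the $dx_i$ is preserved. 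So $\psi$ respects exterior degree, and $K$ is a bigraded ideal. Thus it suffices to show: if $f$ has pure exterior degree $\ell$ and $\psi(f)=0$, then $f\in(P_{L,S})$. The idea is to use the relations $P_{L,S}$ to reduce $f$ modulo the ideal to something of strictly lower exterior degree (or to $0$), at which point induction/Theorem \ref{t1} finishes. Concretely, modulo the ideal generated by the $P_{L,S}$ with $|S|=\ell$, I would argue that the leading exterior part $\sum_{|S'|=\ell} f_{S'}u_{S'}$ can be rewritten so that the $f_{S'}$ are "reduced" — e.g. that whenever $S'=\{j_1<\dots<j_\ell\}$ and there is a linear relation $L$ supported on an index set containing $S'$, the polynomial $f_{S'}$ can be taken modulo $I$ and, more delicately, that the coefficient polynomials across different $S'$ are not independent but must be compatible so that $\psi$ can vanish. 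The element $P_{L,S}$ is precisely the tool that trades a $t$-monomial carrying $dz_{j_s}$ for the $dL$-substitution; iterating this should normalize $f$.

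The main obstacle — and where I expect the real work to be — is the bookkeeping that shows the reduction terminates and that the "reduced" elements of exterior degree $\ell$ in $K$ are forced to come from lower exterior degree. In the commutative case (Theorem \ref{t1}) one has a single tier; here the exterior variables couple the tiers, and one must show that a $u$-homogeneous element of $K$ of top exterior degree, after using the $P_{L,S}$ with that $S$, drops strictly. I would try to make this precise either (a) by choosing a monomial order on $\Xi$ refining the exterior grading and showing the $P_{L,S}$ (or the "minimal-relation" subfamily promised in Section \ref{s2}) form a Gröbner-type generating set, comparing leading terms against an explicit $F$-basis of $T_{\mathcal A,\mathbb A_F^n}$; or (b) by exhibiting an explicit $F$-basis of $\Xi/(P_{L,S})$ — monomials $t^a u_{S'}$ with $a$ in some broken-circuit-type normal form depending on $S'$ — and a matching basis of the image subring $T_{\mathcal A,\mathbb A_F^n}\subseteq T$, then checking $\psi$ sends one bijectively to the other, forcing $K=(P_{L,S})$ by a dimension count in each bidegree (exactly the strategy Holler and I. Kriz used for \rref{geomfixptsz2}). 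Route (b) is cleanest if the basis can be guessed correctly; the delicate point is getting the normal form for the exterior part right, since $u_j = t_j\,dz_j$ is not independent of $t_j$ and the relation $dL=0$ entangles the $dz_j$'s just as $L=0$ entangles the $t_j$'s. I would also verify the base case $\ell=0$ reduces exactly to Theorem \ref{t1}, and check the edge case where the $z_i$ do not span (so that $\psi$ is not injective on the polynomial part in the expected way), confirming nothing breaks.
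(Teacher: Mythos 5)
Your easy inclusion and your general orientation (grade by Grassmann degree in the $u_i$'s, take Theorem \ref{t1} as the degree-zero case, and look for a Gr\"obner-type normal form) are consistent with the paper, but the proposal stops exactly where the proof begins, and the mechanism you sketch is not the one that works. First, there is no induction that drops the exterior degree: since $\psi$ preserves Grassmann degree, $K$ splits as a direct sum over $r$ and the whole difficulty lives inside a fixed degree $r$; the ``tiers'' do not couple. The key move, absent from your plan, is to realize the degree-$r$ part of the question as an \emph{intersection of two submodules of a free module} over $F[t_1,\dots,t_m]$: after choosing $z_1,\dots,z_{m_0}$ to be a basis of the span, one sets $\Phi_r=F[t_1,\dots,t_m]\otimes\Lambda^r_F[dz_1,\dots,dz_{m_0}]$, lets $P_r$ be the submodule generated by the $t_I\,dz_I$ (the image of the $u$-monomials) and $N_r$ the submodule generated by the $P_L\cdot dz_{I'}$ (the relations), and must show that $N_r\cap P_r$ is generated by the $\Lambda_F[u_1,\dots,u_m]$-multiples of the $P_{L,S}$. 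The intersection is then computed by the standard device of adjoining a variable $s$, forming the module generated by the $s\cdot t_I e_I$ and $(1-s)\cdot P_L e_{I'}$, and extracting the $s$-free part of a Gr\"obner basis; the substance of the paper is Lemma \ref{lemmafort30grobner}, a six-case Buchberger-criterion verification for an explicit candidate basis $\mathcal{S}_1\cup\mathcal{S}_2\cup\mathcal{S}_3$ whose third family encodes precisely the $P_{L,S}$.

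Second, you never identify the external input that makes the hard cases close: the theorem of Proudfoot and Speyer \cite{foot} that the $P_L$ form a \emph{universal} Gr\"obner basis of $I$. This is what controls the S-pairs of two relation elements (the $\mathcal{S}_2$ vs.\ $\mathcal{S}_2$ and $\mathcal{S}_3$ vs.\ $\mathcal{S}_3$ cases), where the difference must be re-expanded in terms of $P_M$ for $M$ a linear combination of $L$ and $L'$, together with a combinatorial analysis of which $t$-variables each summand can miss (the condition $|(|L|\cup|L'|)\smallsetminus(Q_{i,j}\cup M_i)|\leq 1$ in the paper, which is exactly what allows the stray $e_J$'s to be traded away using $dL$). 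Your alternative route (b), an explicit monomial basis of $\Xi/(P_{L,S})$ matched against a basis of the image, would require essentially the same normal-form information for $N_r\cap P_r$, so it is not a shortcut. As written, the proposal is a plausible strategy statement rather than a proof: the reduction ``use the $P_{L,S}$ to normalize the top exterior part and drop degree'' is neither carried out nor, in the form stated, what actually happens.
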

This algebraic Theorem, along with Theorem \ref{et6t} below, was used in \cite{h} to prove the following result:
\begin{corollary} (\cite{h})
For $p>2$, the $\Z$-graded coefficient ring \rref{a1} of the constant
$\underline{\Z/p}$-Mackey functor ordinary $(\Z/p)^n$-equivariant cohomology spectrum with inclusions
$S^0\rightarrow S^{\alpha_i}$ inverted where $\alpha_i$ are complex irreducible representations corresponding to the hyperplanes $z_i$ is
$$R_S=T_{\mathcal{A},\mathbb{A}_F^n}.$$
\end{corollary}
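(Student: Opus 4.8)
\medskip\noindent
The plan is to obtain this as the integer-graded shadow of an equivariant-homotopy computation, combining the algebraic input of Theorems \ref{t30} and \ref{et6t} with a spectral-sequence analysis of the localized coefficients. Throughout set $F=\F_p$ and $G=(\Z/p)^n$. The first reduction is that, by the very definition of the localization inverting the maps $S^0\to S^{\alpha_i}$, one has
$$R_S\;=\;\pi_*\Bigl(\bigl(\textstyle\bigwedge_{i=1}^m S^{\infty\alpha_i}\bigr)\wedge H\underline{\Z/p}\Bigr)^{G},$$
which concretely is the integer-graded part of the localization of the $RO(G)$-graded ring $(H\underline{\Z/p})_\star$ obtained by inverting the Euler classes $a_{\alpha_i}\in\pi_{-\alpha_i}$. (Note that inverting $a_{\alpha_i}$ automatically inverts the Euler class of any $G$-representation differing from $\alpha_i$ by a nonzero character power or by conjugation, since under the additive formal group law of $H\underline{\Z/p}$ such Euler classes differ by a unit of $\F_p$; hence $R_S$ depends only on the underlying hyperplane arrangement $\mathcal{A}$, matching the notation.)

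Next I would construct a comparison homomorphism $\phi\colon T_{\mathcal{A},\mathbb{A}_F^n}\to R_S$. In $(H\underline{\Z/p})_\star$ one has, for each $\alpha_i$, besides the Euler class $a_{\alpha_i}$, an orientation class restricting non-equivariantly to the generator of $\tilde H^2(S^2;\F_p)$, and a companion degree-one class; after inverting the $a_{\alpha_i}$ these combine into integer-degree classes $t_i\in (R_S)_2$ and $u_i\in (R_S)_1$, whose precise constructions are the ones given in \cite{h}. The key point is that because the formal group law of $H\underline{\Z/p}$ is additive, the Euler classes, orientation classes, and degree-one classes are all additive in the representation variable; consequently a linear relation $L=\sum_j a_j z_{i_j}=0$ among the hyperplane equations — equivalently a multiplicative relation among the associated characters — forces, after clearing denominators, exactly the identities $P_L(t)=0$ and, incorporating the exterior classes $u_i$, $P_{L,S}(t,u)=0$ of \rref{PLSDefinition} in $R_S$. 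By Theorem \ref{t30} the polynomials $P_{L,S}$ generate $K=\ker\psi$, so $t_i\mapsto t_i$, $u_i\mapsto u_i$ descends to a well-defined graded $F$-algebra homomorphism $\phi\colon T_{\mathcal{A},\mathbb{A}_F^n}=\Xi/K\to R_S$.

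It then remains to prove $\phi$ is bijective. For surjectivity I would run the spectral sequence generalizing to $p>2$ the one used in \cite{hk}: filtering $\bigl(\bigwedge S^{\infty\alpha_i}\bigr)\wedge H\underline{\Z/p}$ by the number of inverted representation spheres, the associated graded is assembled from shifted copies of $H^*(BG;\F_p)$-type pieces localized along the arrangement, and one reads off from it that $R_S$ is generated as an $F$-algebra by the classes $t_i$ and $u_i$ above, so that $\phi$ is onto. For injectivity I would compare Poincar\'e series in each integer degree $k$: Theorem \ref{et6t} supplies an explicit additive basis, hence the value of $\dim_F(T_{\mathcal{A},\mathbb{A}_F^n})_k$, while the (collapsed) spectral sequence computes $\dim_F(R_S)_k$; showing the two agree in every degree forces the surjection $\phi$ to be an isomorphism.

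The main obstacle is the spectral-sequence half of this last step. In contrast with the case $p=2$, the exterior generators $dx_i$ enlarge both the $E_2$-page and the space of a priori possible differentials and additive extensions, and one must prove that the spectral sequence degenerates and is additively split at the expected stage, so that its abutment matches $T_{\mathcal{A},\mathbb{A}_F^n}$ on the nose — this is precisely the complication that \cite{hk} left unresolved for odd $p$, and here one leans on the explicit combinatorial description of $T_{\mathcal{A},\mathbb{A}_F^n}$ from Theorems \ref{t30} and \ref{et6t} to pin down the answer. A secondary, more routine point needing care is the verification in the second paragraph that the topologically-defined classes satisfy the relations $P_{L,S}=0$ exactly (not merely modulo lower filtration), which rests on the additivity of the relevant classes under the additive formal group law together with the combinatorial structure built into the definition \rref{PLSDefinition}.
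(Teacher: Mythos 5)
The paper does not prove this corollary itself: it is quoted from \cite{h}, where Holler and I.~Kriz carry out the topological argument using the present paper's Theorems \ref{t30} and \ref{et6t} as algebraic input. Your outline correctly identifies that division of labor --- the presentation of $T_{\mathcal{A},\mathbb{A}_F^n}$ by the relations $P_{L,S}$ is what makes the comparison map $\phi$ well defined, and the reduction to minimal relations in Theorem \ref{et6t} is what makes the target computable --- and your remark about the additive formal group law (Euler classes of nonzero powers and conjugates of a character differing by units of $\F_p$, so that $R_S$ depends only on the underlying arrangement) is the right justification for matching the topological localization to the hyperplane data.

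As a proof, however, the proposal is incomplete at exactly the point where the content of \cite{h} lies. You assert that the filtration spectral sequence ``degenerates and is additively split at the expected stage'' and that the two Poincar\'e series agree in every integer degree, but you establish neither claim; you explicitly flag them as the main obstacle. Those two steps are the theorem, not routine verifications: for $p>2$ the exterior classes $u_i$ enlarge the $E_2$-page and create differentials and extension problems with no analogue at $p=2$, which is precisely why \cite{hk} could not complete the odd-primary case. A secondary inaccuracy: Theorem \ref{et6t} gives a smaller \emph{generating set} for the relation ideal $K$, not an additive basis of the quotient; the dimension count you invoke would have to come instead from the Gr\"obner basis of Lemma \ref{lemmafort30grobner}, whose standard monomials furnish the additive basis. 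So the architecture of your argument is the standard and correct one, but the two load-bearing computations are missing, and within this paper the statement should simply be treated as a citation of \cite{h}.
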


\vspace{5mm}

\noindent{\bf Example:}
Let $L=z_1+z_2+z_3=0\in\Omega$. Then 
we have
$$P_L=P_{L,\emptyset}=\frac{z_1+z_2+z_3}{z_1z_2 z_3} = t_2t_3 +t_1t_3+ t_1 t_2.$$
Now to compute $P_{L,\{2\}}$, write
\beg{examplepldz2}{P_Ldz_2= t_2t_3 dz_2 +t_1t_2 dz_2+t_1t_3dz_2= u_2t_3+t_1u_2+t_1t_3dz_2.}
Now use
\beg{exampledL}{dL=dz_1+dz_2+dz_3=0} 
to express
$dz_2=-dz_1-dz_3,$ which we use to conclude
$$t_2t_3dz_2=-t_1t_3(dz_1+dz_3)=u_1t_3+u_3t_1.$$
Substituting this into \rref{examplepldz2} gives the relation
$$P_{L,{\{2\}}}=u_2 (t_1+t_3)-u_1t_3-u_3t_1.$$
To calculate $P_{L,\{1,2\}}$, we start with the expression
$$p_Ldz_1dz_2= t_2t_3dz_1dz_2+t_1t_2dz_1dz_2+t_1t_3dz_1dz_2=$$
$$=t_2t_3dz_1dz_2+u_1u_2+t_1t_3dz_1dz_3.$$
Using \rref{exampledL} again, we get
$$t_2t_3dz_1dz_2=t_2t_3(-dz_2-dz_3)dz_2= t_2t_3 dz_2dz_3=u_2u_3$$
and 
$$t_1t_3dz_1dz_2=t_1t_3dz_1(-dz_1-dz_3)= u_3u_1.$$
Thus, we obtain the relation
$$P_{L,{\{1,2\}}}=u_1u_2+u_2u_3 +u_3u_1.$$
The reader should keep in mind that the above derivation of examples of the relations $P_{L,S}$
is used simply to explain our definition of these relations. Nevertheless, they illustrate the fact that $P_{L,S}$ is a relation in
$t_1^{-1}\dots t_m^{-1}\Xi$ which is contained in
$\Xi$, and thus is valid in $\Xi$.

\vspace{5mm}

\section{The proof of the main result}
\label{s2}

\vspace{5mm}

In this section, we will prove Theorem \ref{t30}.
In this proof, we will use the notion of a Gr\"{o}bner basis of a module. 
Let $F$ be a field and let $R= F[x_1,\dots , x_n]$.
Consider a free module 
\beg{Freemoduleoneis}{R\{ e_1 ,\dots, e_k\}.}
By a {\em monomial} we mean a product of some powers of the $x_i$'s with one $e_j$ and possibly a non-zero coefficient from $F$.
On monomials (ignoring the coefficients) we have the TOP (term over position) lexicographic order given by 
$$x_n > \dots > x_1 > e_k> \dots > e_1.$$
A nonzero element $p$ of \rref{Freemoduleoneis} can be expressed as a sum of monomials
which are not $F$-multiples of each other, the greatest of which is called the {\em leading term} $LT(p)$.
Let $M$ be a submodule of \rref{Freemoduleoneis}. Note that by the Hilbert Basis Theorem, $M$ must be finitely generated.
A set of $R$-module generators $g_i$ of $M$ is called a {\em Gr\"{o}bner basis}
if their leading terms $LT(g_i)$ generate the submodule of \rref{Freemoduleoneis}
generated by the leading terms of all elements of $M$.
(We allow the set to include $0$, which does not affect whether it is a Gr\"{o}bner basis or not).

Then we have the following fact known as the {\em Buchberger criterion}:

\begin{theorem}\label{eisenbudthm} (Theorem 15.8, \cite{eisenbud})
Let $g_1, \dots, g_n$ be nonzero elements of $M$. Let $f_{i,j}$, $f_{j,i}$ be the monomials of minimal possible degree
such that the leading terms of $f_{j, i} \cdot g_i$ and $f_{i, j} \cdot g_j$ are equal for every $i\neq j \in \{ 1, \dots , n\}$
for which the leading terms of $g_i$ and $g_j$ involve the same basis element $e_\ell$ of $M$. 
Then the set $\{ g_1, \dots g_n\}$ is a Gr\"{o}bner basis for $M$ if and only if
there exist, for all applicable $i,j \in \{ 1,\dots, n\}$, polynomials $h_{i,j,s}$ such that $f_{j,i}\cdot g_i - f_{i,j}\cdot g_j=0$ or
$$f_{j,i}\cdot g_i - f_{i,j} \cdot g_j = \sum_{s=1}^n h_{i,j,s} g_s$$
where the summands on the right hand side have leading terms less than or equal to
the leading term of $f_{j,i} \cdot g_i - f_{i,j} \cdot g_j$.
\end{theorem}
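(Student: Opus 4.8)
The plan is to prove the classical Buchberger criterion in the module form of \cite{eisenbud}, Theorem~15.8, by the standard argument: the ``only if'' direction is a direct application of the division algorithm, and the ``if'' direction rests on a minimal-representation argument combined with the telescoping identity for syzygies among monomials. First I would fix notation: for a nonzero $p$ in \rref{Freemoduleoneis}, write $LC(p)\in F^\times$ for its leading coefficient and $LM(p)$ for its leading monomial ignoring the coefficient, so that $LT(p)=LC(p)\,LM(p)$; and I would note that divisibility and least common multiples of module monomials are only defined among monomials involving the same basis element $e_\ell$ --- this is exactly why only the indicated pairs $(i,j)$ need to be tested. I would also recall the division algorithm in \rref{Freemoduleoneis}: any element $f$ can be written $f=\sum_s h_s g_s + r$ with $LT(h_s g_s)\le LT(f)$ for all $s$ and with $r$ \emph{reduced}, i.e.\ no monomial of $r$ divisible by any $LT(g_s)$; and that $\{g_1,\dots,g_n\}$ being a Gr\"obner basis of $M$ is equivalent to: the remainder of every $f\in M$ is $0$, equivalently $LT(f)$ is divisible by some $LT(g_s)$ for every nonzero $f\in M$.

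For the ``only if'' direction I would argue that, assuming $\{g_1,\dots,g_n\}$ is a Gr\"obner basis, each $\mathsf S_{ij}:=f_{j,i}g_i-f_{i,j}g_j$ lies in $M$, and, by the minimality in the choice of the $f$'s, its two leading terms cancel, so either $\mathsf S_{ij}=0$ or $LT(\mathsf S_{ij})<\mathrm{lcm}(LT(g_i),LT(g_j))$. Dividing $\mathsf S_{ij}$ by $(g_1,\dots,g_n)$ produces $\mathsf S_{ij}=\sum_s h_{i,j,s}g_s+r$ with the stated degree bound and $r\in M$ reduced; the Gr\"obner property forces $r=0$, which is exactly the desired expression.

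The ``if'' direction is the substantive part. Assuming every $\mathsf S_{ij}$ admits such an expression, I must show that $LT(f)$ is divisible by some $LT(g_i)$ for each nonzero $f\in M$. I would take, among all representations $f=\sum_i p_i g_i$, one minimizing the module monomial $m:=\max_i LM(p_i g_i)$. If $m=LM(f)$ we are done, since then $LM(g_i)\mid LM(f)$ for any $i$ attaining the maximum. So suppose $m>LM(f)$, and derive a contradiction by exhibiting a representation with a strictly smaller value of $m$. Let $T=\{\,i:LM(p_i g_i)=m\,\}$; every $g_i$ with $i\in T$ has leading term in the single component $e_{\ell_0}$ appearing in $m$, so all pairs inside $T$ are tested pairs, and $|T|\ge 2$ (otherwise the coefficient of $m$ in $f$ would be a single nonzero scalar, forcing $LM(f)=m$). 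Writing $d_i=LC(p_i)\,LC(g_i)$ for $i\in T$, the coefficient of $m$ in $f$ equals $\sum_{i\in T}d_i$, which vanishes because $LM(f)<m$. Now I would invoke the telescoping identity: fixing an ordering $i_1,\dots,i_s$ of $T$ and setting $u_r:=d_{i_r}^{-1}\,LT(p_{i_r})\,g_{i_r}$ (so $LT(u_r)$ is the monomial $m$ with coefficient $1$),
$$\sum_{i\in T}LT(p_i)\,g_i=\sum_{r=1}^{s}d_{i_r}u_r=\sum_{r=1}^{s-1}(d_{i_1}+\dots+d_{i_r})\,(u_r-u_{r+1}),$$
using $\sum_{r=1}^{s}d_{i_r}=0$. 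Each difference $u_r-u_{r+1}$ is a scalar multiple of $x^{b_r}\,\mathsf S_{i_r,i_{r+1}}$, where $x^{b_r}$ is the monomial in the $x_i$ with $x^{b_r}\,\mathrm{lcm}(LM(g_{i_r}),LM(g_{i_{r+1}}))=m$; such $x^{b_r}$ exists because $LM(g_{i_r})$ and $LM(g_{i_{r+1}})$ both divide the monomial part of $m$. Substituting the assumed expression $\mathsf S_{i_r,i_{r+1}}=\sum_s h_{i_r,i_{r+1},s}g_s$, whose summands obey $LM(h_{i_r,i_{r+1},s}g_s)\le LM(\mathsf S_{i_r,i_{r+1}})<\mathrm{lcm}(LM(g_{i_r}),LM(g_{i_{r+1}}))$, every resulting summand $x^{b_r}h_{i_r,i_{r+1},s}g_s$ has leading monomial strictly below $m$. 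Combining with $f=\sum_{i\in T}LT(p_i)g_i+\sum_{i\in T}(p_i-LT(p_i))g_i+\sum_{i\notin T}p_i g_i$, where the last two sums already consist of terms with leading monomial $<m$, produces a representation of $f$ whose maximum lies strictly below $m$, contradicting minimality. Hence $m=LM(f)$, and $\{g_1,\dots,g_n\}$ is a Gr\"obner basis.

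The step I expect to be the main obstacle is the bookkeeping in the ``if'' direction: expressing the cancellation among the degree-$m$ terms through the finitely many tested S-polynomials via the telescoping identity, correctly identifying the cofactors $x^{b_r}$, and verifying that multiplying a standard expression of $\mathsf S_{i_r,i_{r+1}}$ by $x^{b_r}$ keeps \emph{every} summand's leading monomial strictly below $m$ --- all of this hinging on the module-specific observation that only pairs whose leading terms involve the same basis element can produce a cancellation, which is precisely what makes the criterion a finite test.
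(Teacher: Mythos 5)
Your proof is correct and is the standard argument for the module form of the Buchberger criterion (the division algorithm for the ``only if'' direction, and the minimal-representation plus telescoping-syzygy argument for the ``if'' direction). The paper itself gives no proof of this statement --- it is quoted from Eisenbud, Theorem 15.8 --- and your argument is essentially the one found there, so there is nothing further to compare.
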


\qed

\vspace{5mm}

To apply this to our situation, first define 
$$\Psi_{\mathcal{A},\mathbb{A}_F^n} = R_{\mathcal{A}, \mathbb{A}_F^n} \otimes {\Lambda}_F [dz_1,\dots, dz_m]/ (dL \mid L \in \mathcal{Z}).$$
Note that this is a graded $R_{\mathcal{A}, \mathbb{A}_F^n}$-module by Grassmannian degree. In other words,
it can be expressed as the direct sum of the free $R_{\mathcal{A}, \mathbb{A}_F^n}$-modules 
\beg{GradedPieceOfPsiOverDLS}{M_r = {\Lambda}^r_{R_{\mathcal{A},\mathbb{A}_F^n}} [dz_1,\dots, dz_{m}]/
({\Lambda}^{r-1}_{R_{\mathcal{A},\mathbb{A}_F^n}} [dz_1,\dots, dz_m] \wedge F\{ dL \mid L \in \mathcal{Z}\}).}
Now, without loss of generality,
$z_1,\dots, z_{m_0}$ is a basis of the $F$-vector space generated by
$z_1,\dots, z_m$, for some $1\leq m_0 \leq m$. 
For any 
$$I=\{ i_1 < \dots < i_r\} \subseteq \{1,\dots, m\},$$
we can write
$$dz_I = dz_{i_1} \wedge \dots \wedge dz_{i_r} = a_1 dz_{I_1'} + \dots +a_k dz_{I_k'}$$
for some $I_1', \dots , I_k' \subseteq \{1,\dots, m_0\}$ of cardinality $r$, 
which are unique if we insist the sets $I_j'$ be different and the coefficients be non-zero.
In other words, the ${m_0 \choose r}$ elements $dz_{I'}$ for subsets $I' \subseteq \{1, \dots , m_0\}$
of cardinality $r$ form a basis of the free $R_{\mathcal{A},\mathbb{A}_F^n}$-module $M_r$.
We can write
$$M_r = {\Lambda}^r_{R_{\mathcal{A},\mathbb{A}_F^n}} [dz_1,\dots, dz_{m_0}].$$
(We write $=$ instead of $\cong$ to indicate that the isomorphism is canonical.)
We shall also write 
$$e_I = dz_I$$
for $I\subseteq \{ 1, \dots, m\}$.

Now $T_{\mathcal{A},\mathbb{A}_F^n}$ is the graded $R_{\mathcal{A},\mathbb{A}_F^n}$-submodule of $\Psi_{\mathcal{A},\mathbb{A}_F^n}$
whose degree $r$ submodule is generated by the elements
$$u_{i_1} \wedge \dots \wedge u_{i_r} = t_{i_1} \cdot \dots \cdot t_{i_r} dz_{i_1} \wedge \dots \wedge dz_{i_r}$$
with $I=\{ i_1 <\dots < i_r\} \subseteq \{ 1, \dots, m\}$.
Denote
$$t_I = t_{i_1} \cdot \dots \cdot t_{i_r}.$$

Now consider the $F[t_1 ,\dots, t_m]$-module
$$\Phi_r = F[t_1, \dots , t_m] \otimes {\Lambda}^r_F [dz_1,\dots , dz_{m_0}].$$
Then consider the submodules of $\Phi_r$ defined by
$$P_r = \langle t_I \cdot dz_I \mid |I|=r,\; I\subseteq \{1, \dots, m\}\rangle$$
$$N_r = \langle P_L \cdot dz_{I'} \mid |I'| =r,\; I' \subseteq \{1,\dots, m_0\}, L \in \mathcal{Z} \rangle.$$
We have (by the Homomorphism Theorem) 
$$T_{\mathcal{A}, \mathbb{A}_F^n} = \bigoplus_{r=1}^{m_0} (P_r/N_r\cap P_r).$$
Hence, it suffices to calculate the submodule of relations $N_r\cap P_r$ for a given $r$ and verify that it is generated by the 
${\bigwedge}_F[u_1, \dots , u_m]$-multiples of the $P_{L,S}$'s contained in $M_r$.

We will use the Buchberger criterion (Theorem \ref{eisenbudthm}) above to calculate $N_r \cap P_r$.
In general, for two submodules 
$\langle f_1,\dots, f_n\rangle$, $\langle g_1,\dots, g_m \rangle$ of a free module over a polynomial ring,
their intersection can be calculated by introducing another polynomial variable $s$ greater than the other variables
in the lexicographic order; it is then generated by all elements of a 
Gr\"{o}bner basis of
$$\langle f_1 \cdot s, \dots, f_n \cdot s, g_1 \cdot (1-s), \dots, g_m \cdot (1-s)\rangle$$ 
that do not contain $s$.

Letting 
$$L=a_1z_{i_1}+\dots+a_k z_{i_k}$$
where $1\leq i_1<\dots<i_k\leq m$, $a_i\neq 0\in F$, put 
\beg{eee1}{|L|:=\{i_1,\dots,i_k\}.}

Then Theorem \ref{t30} follows from the following

\begin{lemma}\label{lemmafort30grobner}
The set $\mathcal{S}_1 \cup \mathcal{S}_2 \cup \mathcal{S}_3$, where
\beg{Groebnerthreetermbasis}{\begin{array}{c}
\mathcal{S}_1 =\{ s\cdot t_{I_1\cup \dots \cup I_k}\cdot (a_1e_{I_1} +\dots +a_k e_{I_k})\mid\\ 
a_i \in F,\; I_1, \dots I_k \subseteq \{ 1, \dots m\}, \forall i \; |I_i|=r\},\\
\\ 
\mathcal{S}_2 =\{ (1-s)\cdot P_L\cdot e_{I'}\mid L\in \mathcal{Z}, \; I' \subseteq \{1, \dots, m_0\}, \; |I'| =r\}, \\
\\
\mathcal{S}_3 = \{ P_L \cdot t_{(I_1\cup\dots \cup I_k)\smallsetminus |L|} \cdot (a_1 e_{I_1} +\dots + a_k e_{I_k})\mid \\ 
a_i \in F, \; L\in \mathcal{Z}, \;I_1, \dots I_k \subseteq \{ 1, \dots m\}, \forall i \; |I_i|=r\}.
\end{array}}
forms a Gr\"{o}bner basis of the $F[s, t_1, \dots ,t_m]$-submodule of $\Phi_r \otimes_F F[s]$ generated by 
\beg{OriginalModuleBasisss}{
\begin{array}{c}
\{s\cdot t_I\cdot e_{I}\mid I\subseteq \{1, \dots, m\}, \; |I|=r , \\
(1-s)\cdot P_L\cdot e_{I'}\mid I' \subseteq \{ 1,\dots, m_0\}, \; |I'|=r , \; L \in \mathcal{Z}\}
\end{array}
}
with respect to the TOP lexicographic order specified above.
\end{lemma}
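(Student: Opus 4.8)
The plan is to verify the Buchberger criterion (Theorem~\ref{eisenbudthm}) directly for the proposed set $\mathcal{S}_1\cup\mathcal{S}_2\cup\mathcal{S}_3$. First I would pin down the leading terms in the TOP lexicographic order $s>x_n>\dots>x_1>e_k>\dots>e_1$ (here the $t_i$ play the role of polynomial variables and the $e_I$ the role of basis elements). For a generator of $\mathcal{S}_1$ the leading term is $s$ times the $t$-monomial $t_{I_1\cup\dots\cup I_k}$ times the largest of the $e_{I_j}$; for $\mathcal{S}_2$ it is $(1-s)$'s $s$-term, i.e. $-s$ times $LT(P_L)\cdot e_{I'}$ — wait, more carefully, since $s$ is the largest variable, $(1-s)\cdot P_L\cdot e_{I'}$ has leading term $-s\cdot LT(P_L)\cdot e_{I'}$; for $\mathcal{S}_3$ it is $LT(P_L)\cdot t_{(I_1\cup\dots\cup I_k)\setminus|L|}$ times the largest $e_{I_j}$, and crucially this is $s$-free. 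Then I would group the S-polynomial pairs by which of the three families the two generators come from, six cases in all, plus the internal pairs within each family.

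The key structural observation driving everything is the identity
$$t_{I}\cdot dz_I = t_{|L|\setminus ?}\cdot\bigl(\text{stuff}\bigr)$$
more precisely the algebraic relation, valid in $t_1^{-1}\cdots t_m^{-1}\Xi$ hence in $\Xi$, that ties $P_L\cdot dz_S$ to a combination of $t$-monomials times $dL$-translates; this is exactly how the $P_{L,S}$ were constructed, and one checks that $P_{L,S}$, after clearing denominators, equals (up to sign and a unit $t$-monomial) a member of $\mathcal{S}_3$ reduced modulo $\mathcal{S}_2$. So the $\mathcal{S}_1$--$\mathcal{S}_2$ S-polynomials should reduce, via the relation $s\cdot f = f - (1-s)f$, to $\mathcal{S}_3$-elements (the $s$ cancels, leaving an $s$-free relation among $P_L$'s and $t\cdot dz$'s which is precisely what $\mathcal{S}_3$ records); the $\mathcal{S}_1$--$\mathcal{S}_1$ and $\mathcal{S}_2$--$\mathcal{S}_2$ pairs reduce within their own families essentially because $F$-linear relations among the $e_I$ and the Plücker-type syzygies among the $dz_I$ over $R_{\mathcal{A},\mathbb{A}_F^n}$ are already built in; and the pairs involving $\mathcal{S}_3$ reduce using the defining relations $P_L$ of $R_{\mathcal{A},\mathbb{A}_F^n}$ from Theorem~\ref{t1} together with, again, the $dL$-relations. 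Throughout, one must check the degree/leading-term bound in the Buchberger criterion: each reduction step must not raise the leading term, which follows because multiplying $P_L$ by the ``missing'' $t$-variables $t_{(I_1\cup\dots\cup I_k)\setminus|L|}$ is exactly the minimal multiplier making leading terms agree.

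Having the Gröbner basis, the conclusion is bookkeeping: the elements of $\mathcal{S}_1\cup\mathcal{S}_2\cup\mathcal{S}_3$ that are $s$-free are exactly those of $\mathcal{S}_3$, so $N_r\cap P_r$ (the intersection computed by the $s$-trick) is generated by the $\mathcal{S}_3$-elements; unwinding the definition of $P_{L,S}$ shows each such generator, read inside $\Xi$, is a $\bigwedge_F[u_1,\dots,u_m]$-multiple of some $P_{L,S}$ lying in $M_r$, and conversely. Summing over $r$ and invoking $T_{\mathcal{A},\mathbb{A}_F^n}=\bigoplus_r P_r/(N_r\cap P_r)$ gives $K=(P_{L,S}\mid (L,S)\in\mathcal{Y})$, which is Theorem~\ref{t30}.

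\textbf{Main obstacle.} The hard part will be the $\mathcal{S}_1$--$\mathcal{S}_2$ and the $\mathcal{S}_1$--$\mathcal{S}_3$ cases: one has to produce the explicit reduction witnesses $h_{i,j,s}$ and simultaneously verify the leading-term inequality, and this is where the combinatorics of \emph{which} $t$-variables survive in $t_{(I_1\cup\dots\cup I_k)\setminus|L|}$ versus $t_{I_1\cup\dots\cup I_k}$ must be tracked carefully — in particular showing that when leading terms of an $\mathcal{S}_1$-element and an $\mathcal{S}_2$-element share a basis element $e_\ell$, the S-polynomial lands in the span of $\mathcal{S}_3$ (and lower terms) rather than producing a genuinely new generator. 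A secondary subtlety is handling the non-uniqueness in writing $dz_I$ in the $dz_{I'}$-basis ($I'\subseteq\{1,\dots,m_0\}$): one must be consistent about rewriting every $e_I$ with $I\not\subseteq\{1,\dots,m_0\}$ via the $dL$-relations before comparing leading terms, or equivalently work throughout in the quotient $M_r$ and check the criterion there.
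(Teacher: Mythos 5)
Your overall strategy coincides with the paper's: verify the Buchberger criterion for $\mathcal{S}_1\cup\mathcal{S}_2\cup\mathcal{S}_3$ case by case, with the leading terms identified essentially as you describe, and with the $s$-free generators ($\mathcal{S}_3$) then giving the intersection $N_r\cap P_r$. But the proposal has a genuine gap, and it mislocates the difficulty. The case you dispose of in half a sentence --- ``the pairs involving $\mathcal{S}_3$ reduce using the defining relations $P_L$ together with the $dL$-relations'' --- includes the $\mathcal{S}_3$ vs.\ $\mathcal{S}_3$ case, which is by far the hardest part of the argument. For two elements $P_L\cdot t_{I\smallsetminus|L|}\cdot(a_1e_{I_1}+\dots+a_ke_{I_k})$ and $P_{L'}\cdot t_{J\smallsetminus|L'|}\cdot(b_1e_{J_1}+\dots+b_\ell e_{J_\ell})$ one must: (i) first normalize so that $\min(|L|)\notin I$ and $\min(|L'|)\notin J$ (this normalization is precisely where the definition of $P_{L,S}$ comes from); (ii) justify splitting the S-pair into two sub-pairs, one varying $L$ with the $e$-part fixed and one varying the $e$-part with $L'$ fixed, which requires an explicit three-way leading-term comparison to show the two reductions can be performed independently; and (iii), in the sub-pair with $L\neq L'$, analyze which relations $M_i=a_iL+b_iL'$ can appear when the Buchberger criterion is applied to $P_L$, $P_{L'}$, and prove that each resulting monomial multiple $t_{Q_{i,j}}\cdot P_{M_i}$ omits at most one index of $|L|\cup|L'|$, i.e.\ $|(|L|\cup|L'|)\smallsetminus(Q_{i,j}\cup M_i)|\leq 1$. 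This last bound is proved by partitioning $|L|\cup|L'|$ into the level sets of the ratio of the coefficients of $L$ and $L'$, and it is exactly what permits rewriting the leftover terms via $dL$ or $dL'$ as elements of $\mathcal{S}_3$; without it the reduction does not close up. None of this appears in your plan.

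A second, smaller omission: the $\mathcal{S}_2$ vs.\ $\mathcal{S}_2$ case does not reduce ``within the family'' by anything built into the $dz_I$; it requires the external input that the $P_L$ form a \emph{universal} Gr\"obner basis of the ideal $I$ (Theorem 2 of \cite{foot}), which is also what powers step (iii) above, and your plan never invokes this. By contrast, the cases you single out as the main obstacle ($\mathcal{S}_1$ vs.\ $\mathcal{S}_2$ and $\mathcal{S}_1$ vs.\ $\mathcal{S}_3$) are comparatively routine: after matching leading terms one expands $P_L$ into its $t$-monomials and uses the relation $L$ to eliminate an index from $I_1\cup\dots\cup I_k$, landing in $\mathcal{S}_1$, $\mathcal{S}_2$ and $\mathcal{S}_3$. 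So the plan points in the right direction, but the load-bearing combinatorial argument is missing.
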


\vspace{3mm}
\noindent {\bf Remark:} Note that in the ring $\Psi_{\mathcal{A}, \mathbb{A}_F^n}$, the elements of $\mathcal{S}_3$ are in the ideal
$(P_{L,S})$. Concretely, 
$$P_L t_{I \smallsetminus |L|} e_I = t_{I \smallsetminus |L|} P_{L,I}\in \Psi_{\mathcal{A}, \mathbb{A}_F^n}$$
(see \rref{PLSDefinition}).
\vspace{3mm}

\begin{proof}[Proof of Lemma \ref{lemmafort30grobner}]

First observe that all elements of \rref{Groebnerthreetermbasis} are generated by \rref{OriginalModuleBasisss}. Note that this is only nontrivial
for $\mathcal{S}_3$, in which case it was already observed in Section \ref{Statementoftheresults} (where we introduced $P_{L,S}$).
We will prove Lemma \ref{lemmafort30grobner} by verifying the assumptions of the Buchberger criterion for any applicable pair of elements from \rref{Groebnerthreetermbasis}. This gives six cases:

\vspace{3mm}

\noindent {\bf Case 1:} $\mathcal{S}_1$ vs. $\mathcal{S}_1$.

Suppose we have two elements of $\mathcal{S}_1$, i.e. two nonzero elements
$$s\cdot t_{I_1\cup \dots \cup I_k} \cdot (a_1e_{I_1} +\dots a_k e_{I_k})$$
$$s\cdot t_{J_1\cup\dots \cup J_{\ell}}\cdot (b_1e_{J_1} +\dots b_{\ell} e_{J_{\ell}})$$
whose leading terms involve the same basis element $e_{I'}$, $I' \subseteq \{ 1,\dots , m_0\}$.
Without loss of generality, $a_i, b_j \in F^{\times}$ and $e_{I_1}$, $e_{J_1}$ involve the basis element of the highest degree. 
So, we must multiply the two elements by
$b_1 t_{J_1 \cup \dots \cup J_{\ell} \smallsetminus I_1 \cup \dots \cup I_k}$ and $a_1 t_{I_1 \cup \dots \cup I_k \smallsetminus J_1 \cup \dots \cup J_\ell}$
to match their leading terms.
Then the difference is
$$\begin{array}{c}
s\cdot t_{I_1\cup \dots\cup I_k\cup J_1\cup\dots\cup J_{\ell}}\cdot ( b_1 \cdot (a_1 e_{I_1} + a_2 e_{I_2} + \dots + a_k e_{I_k}) -\\ 
a_1\cdot (b_1 e_{J_1} +b_2 e_{J_2} +\dots + b_{\ell} e_{J_{\ell}})),
\end{array}$$
which is still an element of $\mathcal{S}_1$.

\vspace{3mm}

\noindent {\bf Case 2:} $\mathcal{S}_2$ vs. $\mathcal{S}_2$.

Suppose we have two elements of $\mathcal{S}_2$, say
\beg{2S2firstorigelt}{(1-s) \cdot P_L \cdot e_{I'}}
\beg{2S2secondorigelt}{(1-s)\cdot P_M\cdot e_{J'}}
for $I',J' \subseteq \{1,\dots, m_0\}$, $|I'| = |J'|= r$.
The Buchberger criterion gives a condition only when $e_{I'} = e_{J'}$, i.e. $I'=J'$. 
To match the leading terms of these two elements, we therefore multiply \rref{2S2firstorigelt}, \rref{2S2secondorigelt}
by monomials $a \cdot t_K$, $b\cdot t_{K'}$.
Then $LT(a \cdot t_K \cdot P_L) = LT( b\cdot t_{K'} \cdot P_M)$. 
However, by the results of Proudfoot and Speyer (Theorem 2 of \cite{foot}), the $P_L$'s form a universal Gr\"{o}bner basis.
Thus,
$$a \cdot t_K \cdot P_L -b\cdot t_{K'} \cdot P_M =\sum_N  t_{I_N}\cdot P_{L_N},$$
for some $t_{I_N}$, $P_{L_N}$ where the summands have lesser or equal leading terms than the left hand side, and hence,
$$a \cdot t_K \cdot (1-s) \cdot P_L \cdot e_{I'} - b\cdot t_{K'} \cdot (1-s)\cdot P_M\cdot e_{J'} =(1-s)\cdot e_{I'}\cdot (\sum_N P_{L_N} t_{I_N}),$$
which is a linear combination of elements of $\mathcal{S}_2$.

\vspace{3mm}

\noindent {\bf Case 3:} $\mathcal{S}_3$ vs. $\mathcal{S}_3$.

Suppose we have two different nonzero elements of $\mathcal{S}_3$, say
\beg{2S3bothorigelts}{\begin{array}{c}
P_L \cdot t_{(I_1 \cup \dots \cup I_k) \smallsetminus |L|} \cdot (a_1 \cdot e_{I_1} + \dots + a_k \cdot e_{I_k})\\
P_{L'} \cdot t_{(J_1 \cup \dots \cup J_{\ell}) \smallsetminus |L'|} \cdot (b_1\cdot e_{J_1} + \dots + b_{\ell} \cdot e_{J_{\ell}}).
\end{array}}
Again, the condition of the Buchberger criterion only applies when the leading terms of 
\beg{theleadingtermsareequal}{
\begin{array}{c}
a_1 \cdot e_{I_1} + \dots + a_k \cdot e_{I_k}\\
b_1\cdot e_{J_1} + \dots + b_{\ell} \cdot e_{J_{\ell}}
\end{array}
}
are equal up to non-zero scalar multiple, which, without loss of generality, we can assume to be equal to $1$.

First of all, note that without loss of generality, we have
\beg{WOLOGconditionsS3s}{
\begin{array}{c}
min (|L|) \notin I_1 \cup \dots \cup I_k\\
min (|L'|) \notin J_1 \cup \dots \cup J_{\ell},
\end{array}
}
since otherwise the elements \rref{2S3bothorigelts} are $t_{min (|L|)}$- resp. $t_{min (|L'|)}$-monomial
multiples of other elements of $\mathcal{S}_3$ which satisfy \rref{WOLOGconditionsS3s}
by applying the relations $dL$, respectively $dL'$, to \rref{theleadingtermsareequal}, using \rref{GradedPieceOfPsiOverDLS}.
(Note that this is where our definition of $P_{L,S}$ in Section \ref{Statementoftheresults} comes from.)

To simplify notation, from now on, we shall write
$$I= I_1 \cup \dots \cup I_k$$
$$J= J_1 \cup \dots \cup J_{\ell}$$
and abbreviate the elements \rref{theleadingtermsareequal} as $e_{(I)}$, $e_{(J)}$.
By assumption, we have 
\beg{LeadingtermsS3es}{LT(e_{(I)}) = LT (e_{(J)}).}

Our strategy will be to verify the condition of the Buchberger criterion separately on the pairs
of elements 
\beg{2S3firststepelts}{P_{L'}\cdot t_{I\smallsetminus |L'|}\cdot e_{(I)}, P_L \cdot t_{I\smallsetminus |L|} \cdot e_{(I)}}
and 
\beg{2S3secondstepelts}{P_{L'}\cdot t_{I\smallsetminus |L'|}\cdot e_{(I)} , P_{L'} \cdot t_{J\smallsetminus |L'|} \cdot e_{(J)}.}

First let us verify that this is valid. The elements \rref{2S3bothorigelts} can be brought to a minimal common leading term by
multiplying by $t_i$-monomials. Denote those elements with a common leading term by 
$$A = \overline{P_L \cdot t_{I\smallsetminus |L|}} \cdot e_{(I)}$$
$$C = \overline{P_{L'} \cdot t_{J\smallsetminus |L'|}} \cdot e_{(J)}.$$
First we need to show that
\beg{2S3Validitypreliminarymustdivide}{LT(P_{L'} \cdot t_{I\smallsetminus |L'|}\cdot e_{(I)}) \mid LT(A).}
To this end, note that the $t_i$-monomial factor of $LT(A) = LT(C)$ is of the form $t_H$ with
$$\begin{array}{c}
H = ((|L| \smallsetminus \{ min ( |L|)\}) \cup (I\smallsetminus |L|) ) \cup\\
((|L'| \smallsetminus \{ min (|L'|)\} )\cup (J\smallsetminus |L'|))
\end{array}$$
while $P_{L'}\cdot t_{I\smallsetminus |L'|}$ has leading term (up to scalar multiple)
$t_G$ with
$$G = (|L'| \smallsetminus \{ min (|L'|)\} ) \cup (I\smallsetminus |L'|).$$
Thus, we need to show $G\subseteq H$.

To this end, note that $H$ is $I\cup J \cup |L|\cup |L'|$ with possible exclusion of the elements $min(|L|)$, $min(|L'|)$,
while $G = (|L'| \cup I ) \smallsetminus \{ min (|L'|)\}$.
Thus, the only possible element of $G\smallsetminus H$ could be $min (|L|)$.
But by \rref{WOLOGconditionsS3s}, $min (|L|) \notin I$, so in that case
$$min (|L|) \in |L'| \smallsetminus \{ min (|L'|)\} \subseteq H.$$
Contradiction.

Thus, \rref{2S3Validitypreliminarymustdivide} is proved. However, this also proves
$$LT(P_{L'} \cdot t_{I\smallsetminus |L'|}) \mid LT(\overline{P_{L'} \cdot t_{J\smallsetminus |L'|}})$$
and hence
$$P_{L'} \cdot t_{I\smallsetminus |L'|} \mid  \overline{P_{L'} \cdot t_{J\smallsetminus |L'|}},$$
since both sides are equal up to a $t_i$-monomial multiple.
Put
$$B= \overline{P_{L'} \cdot t_{J\smallsetminus |L'|}} \cdot e_{(I)}.$$
Thus,
$$LT(A) = LT(B) =LT(C).$$
To verify that the pairs \rref{2S3firststepelts}, \rref{2S3secondstepelts} can be considered separately, 
by Theorem \ref{eisenbudthm}, it therefore suffices to show that
\beg{ConditionOnAnglelts}{LT(A- B), \; LT(B-C) \leq LT (A-C).}
Clearly, it now suffices to assume $e_{(I)}\neq e_{(J)}$ (since otherwise $B=C$). Similarly, we can assume $A \neq B$.
Then we have
$$LT(A-B) = LT (\overline{P_L \cdot t_{I\smallsetminus |L|}} - \overline{P_{L'} \cdot t_{J\smallsetminus |L'|}}) \cdot LT(e_{(I)})$$
and
$$LT(A-C) = LT(\overline{P_L \cdot t_{I\smallsetminus |L|}}) \cdot LT(e_{(I)} - e_{(J)})$$
by the TOP ordering.
Now we have
$$LT (\overline{P_L \cdot t_{I\smallsetminus |L|}} -\overline{P_{L'} \cdot t_{J\smallsetminus |L'|}}) < LT (\overline{P_{L'} \cdot t_{J\smallsetminus |L'|}}) = 
LT (\overline{P_L \cdot t_{I\smallsetminus |L|}}),$$
and thus again, since we are using the TOP ordering, 
$$LT(A-B) = LT (\overline{P_L \cdot t_{I\smallsetminus |L|}} -\overline{P_L' \cdot t_{J\smallsetminus |L'|}})\cdot LT(e_{(I)}) <$$
$$LT(\overline{P_L \cdot t_{I\smallsetminus |L|}}) \cdot LT(e_{(I)} - e_{(J)}) = LT (A-C).$$

Also, 
$$LT(B-C) = LT (\overline{P_{L'} \cdot t_{J\smallsetminus |L'|}})\cdot LT(e_{(I)} -e_{(J)}),$$
which equals $LT(A-C)$.
Thus, \rref{ConditionOnAnglelts} is proved.

\vspace{3mm}

\noindent {\em Subcase 3a:} The pair \rref{2S3firststepelts}.

We must verify the condition of the Buchberger criterion for 
$P_L \cdot t_{I\smallsetminus |L|} \cdot e_{(I)}$ and $P_{L'}\cdot t_{I\smallsetminus |L'|}\cdot e_{(I)}$.
Multiplying by $t_i$-monomials to get equal leading terms and applying the Buchberger criterion to $P_L$, $P_{L'}$, 
since the $P_M$'s form a universal Gr\"{o}bner basis by \cite{foot}, the difference is, again,
a linear combination 
\beg{2S3differencefirststep}{e_{(I)} \cdot \sum t_{Q_{i,j}}\cdot P_{M_i}}
for different relations $M_i$ (not scalar multiples of one another)
where the summands have lower leading terms.
Note that several different monomials $t_{Q_{i,j}}$ can be multiplied by the same element $P_{M_i}$, which is why the second index is needed.
Additionally, we have
\beg{QvsILL'}{I\smallsetminus (|L| \cup |L'|) \subseteq Q_{i,j},}
since all the polynomials we started with were multiples of $t_{I\smallsetminus (|L| \cup |L'|)}$.

Now each $M_i$ will be some linear combination of $L$ and $L'$ and thus can be written as
\beg{MiAiLBiL'}{M_i = a_i\cdot L + b_i\cdot L'.}
To see this, take a set of variables $z_s$ modulo the linear relations $L$ and $L'$. By the results of Proudfoot and Speyer \cite{foot},
applying the Buchberger criterion to $P_L$ and $P_{L'}$ (thought of as elements of the polynomial ring on the $t_s$)
then gives a linear combination of some of the polynomials $P_{M_i}$. Those are necessarily of the form \rref{MiAiLBiL'}
since those are the only relations present. However, the resulting equality (of the form ``a monomial multiple of $P_L$ plus a monomial
multiple of $P_{L'}$ equals a linear combination of $P_{M_i}$") will remain valid when other linear relations among the
$z_s$ are added (since this will only enlarge the ideal in the polynomial ring on the generators $t_s$)
and hence, after being multiplied by an appropriate monomial, can be used for \rref{QvsILL'}. This proves \rref{MiAiLBiL'}.

We need to show
\beg{S3step1whatwewant}{|(|L| \cup |L'|)\smallsetminus (Q_{i,j} \cup M_i)| \leq 1.}
The reason why \rref{S3step1whatwewant} suffices is that we have linear relations between the basis elements $e_J$. 
Recall that those elements are exterior multiples of the elements $e_j$. Now if, say, $j \in |L|$, then the relation $dL$ expresses $e_j$
as a linear combination of other elements $e_j'$ by \rref{GradedPieceOfPsiOverDLS}.
Under the assumption \rref{S3step1whatwewant}, such elements are accompanied
by a $t_{j'}$ factor, and thus are linear combinations of the basis elements $\mathcal{S}_3$. The argument of $L'$ is analogous.

To prove \rref{S3step1whatwewant}, consider again the relation \rref{MiAiLBiL'}. 
Let us study the possible sets of indices $s$ such that the coefficient at $z_s$ of a given non-zero linear combination of the relations $L$, $L'$
is 0. Those are sets of such indices s on which the ratio of the coefficients of $L$ and $L'$ at $z_s$ is constant.
There are only finitely many such ratios for which this set of indices is non-empty. I denote these ratios by
$q_k$ ($k\in K$ where $K$ is some finite set). Let us denote the set of indicies where the ratio is $q_k$ by $S_{q_k}$.

More precisely, let $L = \sum \alpha_s \cdot z_s$, $L' = \sum \beta_s \cdot z_s$,
$(\alpha_s, \beta_s)\neq (0,0)$ for $s\in |L|\cup |L'|$, and the disjoint sets
$$S_{q_k} = \{ s\in |L| \cup |L'| \mid [\alpha_s : \beta_s] = q_k\}$$
for different ratios $q_k$.
The possible $|M_i|$'s are of the form $|L|\cup |L'| \smallsetminus S_{q_k}$ (which includes $|L|$, $|L'|$, and $|L|\cup |L'|$).
Each $S_{q_k}\neq \emptyset $ is associated with at most one $M_i$ in \rref{MiAiLBiL'}
such that $|M_i|= |L| \cup |L'| \smallsetminus S_{q_k}$ (since we already know the ratio $[a_i : b_i]$ and we chose the $M_i$
not to be multiples of each other).
Thus, we have proven \rref{S3step1whatwewant} if we can rule out
\beg{S3step1whatcouldhappen}{|S_{q_k} \smallsetminus Q_{i,j}| \geq 2.}
However, \rref{S3step1whatcouldhappen} is impossible since if it occurred, then the monomial terms of 
$t_{Q_{i,j}} \cdot P_{M_i}$
could not be multiples of any of the monomial terms of $t_{Q_{i',j'}} \cdot P_{M_{i'}}$ for $i' \neq i$, $t_{I\smallsetminus |L|} \cdot P_L$,
or $t_{I\smallsetminus |L'|}\cdot P_{L'}$ each of which miss at most one variable $t_{\ell}$ for $\ell \in Q_{i,j}$.
Selecting a minimal $Q_{i,j}$ for a given $i$ (with respect to inclusion), 
the monomial terms of $t_{Q_{i,j}} \cdot P_{M_i}$ also cannot be multiples of any monomial terms of $t_{Q_{i,j'}}\cdot P_{M_i}$ for $j'\neq j$.
This contradicts the assumption that \rref{2S3differencefirststep} was obtained by applying the Buchberger criterion to the elements indicated.
This concludes the case of \rref{2S3firststepelts}.

\vspace{3mm}

\noindent {\em Subcase 3b:} The pair \rref{2S3secondstepelts}. 

We must verify the condition of the Buchberger criterion for
$$P_{L'} \cdot t_{I_1 \cup \dots \cup I_k \smallsetminus |L'|} \cdot (a_1e_{I_1} +\dots a_k e_{I_k}),$$
$$P_{L'} \cdot t_{J_1 \cup \dots \cup J_{\ell} \smallsetminus |L'|} \cdot (b_1e_{J_1} +\dots b_{\ell} e_{J_{\ell}}).$$
The Buchberger algorithm step gives
$$P_{L'} \cdot t_{I_1 \cup \dots \cup I_k \cup J_1 \cup \dots \cup J_{\ell} \smallsetminus |L'|} \cdot (a_1 e_{I_1} + \dots a_k e_{I_k} - b_1 e_{J_1} - \dots - b_{\ell} e_{J_{\ell}}),$$
which is an element of $\mathcal{S}_3$.

\vspace{5mm}

\noindent {\bf Case 4:} $\mathcal{S}_1$ vs. $\mathcal{S}_2$.

Now we will show that the Buchberger criterion's assumptions hold for an element of $\mathcal{S}_1$ and an element of $\mathcal{S}_2$.
Suppose we have two nonzero elements of the form
\beg{S1S2S1elt}{s\cdot t_{I_1 \cup \dots \cup I_k} \cdot (a_1 e_{I_1} + \dots + a_k e_{I_k}), \; I_i \subseteq \{1,\dots, m\}, \; |I_i| =r}
$$(1-s) \cdot P_L\cdot e_{I'}, \; I' \subseteq \{1,\dots, m_0\}, \; |I'|=r.$$
Then there exist some unique nonzero $b_1, \dots , b_{\ell}$'s and distinct $I_1', \dots , I_{\ell}'\subseteq \{ 1,\dots , m_0\}$ such that
$$a_1 \cdot e_{I_1} + \dots + a_k \cdot e_{I_k} = b_1 \cdot e_{I_1'} + \dots + b_{\ell} \cdot e_{I_{\ell}'}.$$
Without loss of generality, they are ordered $e_{I_1'} > \dots > e_{I_{\ell}'}$.
First, this implies, by definition,
$$P_L \cdot t_{(I_1 \cup \dots \cup I_k) \smallsetminus |L|}\cdot (b_1 e_{I_1'} + \dots + b_{\ell} e_{I_{\ell}'}) \in \mathcal{S}_3.$$
Take the minimal degree $t_i$-monomials $f$, $g$, such that the leading term of
\beg{S1S2multipliedbyf}{f\cdot (1-s) \cdot P_L\cdot e_{I'}}
equals the leading term of 
$$g\cdot s\cdot t_{I_1 \cup \dots \cup I_k} \cdot (b_1 e_{I_1'} + \dots + b_{\ell} e_{I_{\ell}'}).$$
Then we must have $I' = I_1'$. 
In addition,
$t_{(I_1 \cup \dots \cup I_k) \smallsetminus |L|}$ must also divide the monomial $f$.
Thus, for some $t_i$-monomial $h$, we have
$$LT(f\cdot (1-s)\cdot P_L\cdot e_{I'}) =$$ 
$$LT ( h \cdot (1-s) \cdot t_{(I_1 \cup \dots \cup I_k) \smallsetminus |L|} \cdot P_L \cdot
(b_1 e_{I_1'} + \dots + b_{\ell} e_{I_{\ell}'})),$$
and the terms including each $b_i e_{I_i'}$ are in $\mathcal{S}_2$.
Also, we have
$$LT( h\cdot (1-s) \cdot t_{(I_1 \cup \dots \cup I_k) \smallsetminus |L|} \cdot P_L\cdot (b_2 e_{I_2'} + \dots + b_{\ell} e_{I_{\ell}'}))$$
$$= LT (g\cdot s \cdot t_{I_1 \cup \dots \cup I_k}\cdot (b_2 e_{I_2'} + \dots + b_{\ell} e_{I_{\ell}'})).$$
So we can replace $(1-s) \cdot P_L \cdot e_{I'}$ by
\beg{S1S2replacementelt}{(1-s) \cdot P_L \cdot t_{I_1 \cup \dots \cup I_k \smallsetminus |L|} (b_1 e_{I_1'} + \dots + b_{\ell} e_{I_{\ell}'})}
in \rref{S1S2S1elt}.

Now in verifying the Buchberger criterion, if we expand the $P_L$ into a sum of $t_i$-monomials in the expression
\beg{S1S2expand}{-s \cdot t_{(I_1 \cup \dots \cup I_k) \smallsetminus |L|} \cdot P_L \cdot (b_1 e_{I_1'} + \dots + b_{\ell} e_{I_{\ell}'}),}
the term of the leading $t_i$-monomial times $g$ equals the leading term of \rref{S1S2multipliedbyf}, and the terms of \rref{S1S2expand}
of the other monomials
containing $t_{|L|\smallsetminus \{ i\}}$, $i\in L$ are in $\mathcal{S}_1$ by eliminating $i$
from $I_1 \cup \dots \cup I_k$ (if $i\in I_1 \cup \dots \cup I_k$) using the relation $L$, as above
in proving the sufficiency of the assumption \rref{WOLOGconditionsS3s}.
Therefore, the difference of \rref{S1S2S1elt} and \rref{S1S2replacementelt} is a sum of multiples of elements of $\mathcal{S}_1, \mathcal{S}_2, \mathcal{S}_3$
of lower or equal leading terms as required.

\vspace{3mm}

\noindent {\bf Case 5:} $\mathcal{S}_1$ vs. $\mathcal{S}_3$.

Suppose we have a nonzero element of $\mathcal{S}_1$ and a nonzero element of $\mathcal{S}_3$ of the forms
$$s \cdot t_{I_1 \cup \dots \cup I_k} \cdot (a_1 e_{I_1} + \dots a_k e_{I_k})$$
$$P_L \cdot t_{(J_1 \cup \dots \cup J_{\ell}) \smallsetminus |L|} \cdot (b_1 e_{J_1} + \dots b_{\ell} e_{J_\ell}).$$
Take the minimal degree $t_i$-monomials $f$, $g$, such that the leading terms of
$$f\cdot s \cdot t_{I_1 \cup \dots \cup I_k} \cdot (a_1 e_{I_1} + \dots a_k e_{I_k})$$
$$g \cdot P_L \cdot t_{(J_1 \cup \dots \cup J_{\ell}) \smallsetminus |L|} \cdot (b_1 e_{J_1} + \dots b_\ell e_{J_\ell})$$
are equal. Then $s$ must divide $g$.
Then, again, after expanding $P_L$ into a sum of $t_i$-monomials, in
$$g \cdot P_L \cdot t_{(J_1 \cup \dots \cup J_k) \smallsetminus |L|} \cdot (b_1 e_{J_1} + \dots b_\ell e_{J_\ell})
-f\cdot s \cdot t_{I_1 \cup \dots \cup I_k} \cdot (a_1 e_{I_1} + \dots a_k e_{I_k})$$
the term corresponding to the greatest monomial will cancel and the terms corresponding
to all other monomials
can be expressed as multiples of elements of $\mathcal{S}_1$ by again using the relation $L$, as above.

\vspace{3mm}

\noindent {\bf Case 6:} $\mathcal{S}_2$ vs. $\mathcal{S}_3$.

Finally, suppose we have an element of $\mathcal{S}_2$ and a nonzero element of $\mathcal{S}_3$ of the forms
$$(1-s) \cdot P_{L'} \cdot e_{I'}$$
$$P_L\cdot t_{(I_1\cup \dots \cup I_k)\smallsetminus |L|} \cdot (a_1 e_{I_1} + \dots + a_k e_{I_k})$$
Take the minimal degree $t_i$-monomials $f$, $g$, such that the leading terms of
\beg{S2S3fterm}{f \cdot (1-s) \cdot P_{L'} \cdot e_{I'}}
and
\beg{S2S3term}{g \cdot P_L \cdot t_{(I_1 \cup \dots \cup I_k) \smallsetminus |L|} \cdot (a_1 e_{I_1} + \dots a_k e_{I_k})}
are equal. Then $g= -s \cdot h$ for some $t_i$-monomial $h$.
If the difference \rref{S2S3fterm} and \rref{S2S3term} does not include $s$, then
$$f \cdot P_{L'} \cdot e_{I'} = h \cdot P_L \cdot t_{(I_1 \cup \dots \cup I_k) \smallsetminus |L|} \cdot (a_1 e_{I_1} + \dots a_k e_{I_k})$$
which is a $t_i$-monomial multiple of an element of $\mathcal{S}_3$.
Otherwise, we can replace \rref{S2S3term} by
\beg{S2S3replacement}{(1-s) \cdot h \cdot P_L \cdot t_{(I_1 \cup \dots \cup I_k) \smallsetminus |L|} \cdot (a_1 e_{I_1} + \dots a_k e_{I_k})}
because the additional term is a $t_i$-monomial multiple of an element of $\mathcal{S}_3$
(which in particular does not involve $s$, thus having a lower leading term than
the difference of \rref{S2S3fterm} and \rref{S2S3term}).
Now the difference of \rref{S2S3fterm} and \rref{S2S3replacement} is a sum of $t_i$-monomial multiples of elements of $\mathcal{S}_2$.
This concludes the proof of Lemma \ref{lemmafort30grobner}.

\end{proof}

\vspace{5mm}

Call $L$ {\em minimal} if there do not exist relations $L_1,L_2$ such that
$$L_1+L_2=L$$ $$|L_1|,|L_2|\subsetneq|L|.$$
Define shuffle permutations as follows:
for sets of natural numbers 
$S_1=\{i_1<\dots<i_k\}$, 
$ S_2=\{j_1<\dots<j_l\}$, 
$S_1\cap S_2=\emptyset,$ denote by $\sigma_{S_1,S_2}$ 
the permutation which puts the sequence 
$(i_1,\dots,i_k,j_1,\dots,j_l)$ in increasing order. Also define for $S=\{i_1<\dots<i_k\}$:
$$t_S:=t_{i_1}\dots t_{i_k}$$
$$u_S:=u_{i_1}\dots u_{i_k}$$
$$dz_S:=dz_{i_1}\dots dz_{i_k}$$

\begin{theorem} The ideals $I$ and $K$ are generated as follows:
$$I=(P_L| L \; is\; a \; minimal \; relation)$$
$$K=(P_{L,S}|L\; is\; a\; minimal\; relation\; and\; S\subseteq |L|)$$
\label{et6t}
(For the case of $I$, see \cite{foot}, Theorem 4.)
\end{theorem}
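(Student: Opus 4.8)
The statement about $I$ is \cite{foot}, Theorem~4: the minimal relations are exactly those whose support is a circuit of the matroid of the $z_i$, and Proudfoot--Speyer show the circuit polynomials generate $I$, so all the work is in the statement about $K$. By Theorem~\ref{t30} it suffices to show that $P_{L,S}$ lies in the ideal generated by $\{P_{L',S'}\mid L'\text{ minimal},\ S'\subseteq|L'|\}$ for \emph{every} relation $L$ and every $S\subseteq|L|$, and I would prove this by induction on $|L|$, the case $L$ minimal being vacuous. Rather than manipulate the $P_{L,S}$ inside $\Xi$ directly, I would run the induction through the module reformulation used to prove Theorem~\ref{t30}: by Lemma~\ref{lemmafort30grobner} (keeping only the Gr\"obner generators free of $s$), the $F[t_1,\dots,t_m]$-module $N_r\cap P_r$ is generated by $\mathcal{S}_3$, and under the correspondence of the Remark following that Lemma the elements of $\mathcal{S}_3$ are $t$-monomial multiples of the $P_{L,S}$. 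So it is enough to show that the subset $\mathcal{S}_3^{\min}\subseteq\mathcal{S}_3$ of generators built from minimal relations generates the same $F[t]$-submodule of $\Phi_r$.

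Two ingredients go into this. The first is a decomposition lemma: if $L$ is not minimal then $L=L_1+L_2$ for relations with $|L_1|,|L_2|\subsetneq|L|$ and, crucially, $|L_1|\cup|L_2|=|L|$. To see this, note $|L|$ is dependent but not a circuit, hence strictly contains a circuit $C$; take $L_1$ the relation supported on $C$, rescaled so its coefficient at some chosen $i_0\in C$ equals that of $L$, and set $L_2=L-L_1$, so $i_0\notin|L_2|$ while $|L|\subseteq|L_1|\cup|L_2|\subseteq|L|\cup C=|L|$. Insisting there be no residual cancellation, i.e. $|L_1|\cup|L_2|=|L|$, is what makes the next step monomial-clean and lets me avoid having to decide whether $N_r\cap P_r$ is saturated in the $t_i$.

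The second ingredient is the identity
\[
P_L \;=\; t_{|L_2|\smallsetminus|L_1|}\,P_{L_1}\;+\;t_{|L_1|\smallsetminus|L_2|}\,P_{L_2},
\]
valid whenever $L=L_1+L_2$ with $|L_1|\cup|L_2|=|L|$; it drops out of $P_M=\sum_{i\in|M|}a_i\,t_{|M|\smallsetminus\{i\}}$ once the two $t$-monomials on the right are absorbed into the index sets, all terms then being supported on $|L|$ and those of vanishing total coefficient cancelling. Given an $\mathcal{S}_3$-generator $g=P_L\cdot t_{I\smallsetminus|L|}\cdot\omega$ with $\omega=\sum a_ie_{I_i}$ and $I=I_1\cup\dots\cup I_k$, substituting this identity and using the trivial equality $t_{|L_j|\smallsetminus|L_i|}\cdot t_{I\smallsetminus|L|}=t_{(|L_j|\smallsetminus|L_i|)\smallsetminus I}\cdot t_{I\smallsetminus|L_j|}$ rewrites $g$ as an $F[t]$-linear combination of the $\mathcal{S}_3$-generators $P_{L_1}\cdot t_{I\smallsetminus|L_1|}\cdot\omega$ and $P_{L_2}\cdot t_{I\smallsetminus|L_2|}\cdot\omega$ (note the form $\omega$ is unchanged). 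Since $|L_1|,|L_2|\subsetneq|L|$, the induction hypothesis puts both of these in $\langle\mathcal{S}_3^{\min}\rangle_{F[t]}$, closing the induction. As $\langle\mathcal{S}_3\rangle_{F[t]}=\langle\mathcal{S}_3^{\min}\rangle_{F[t]}$ in every $\Phi_r$, the same passage that deduced Theorem~\ref{t30} from Lemma~\ref{lemmafort30grobner} now yields $K=(P_{L,S}\mid L\text{ minimal},\ S\subseteq|L|)$.

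The part I expect to need the most care is not the induction but the two book-keeping bridges around it: verifying that the Remark's correspondence between $\mathcal{S}_3$-generators and $t$-monomial multiples of the $P_{L,S}$ is tight enough that restricting $\mathcal{S}_3$ to $\mathcal{S}_3^{\min}$ in the proof of Theorem~\ref{t30} restricts $K$'s generating set accordingly, and checking that \emph{minimal} in the sense of the Definition coincides with \emph{support is a circuit}, which is exactly what makes the decomposition lemma available. If one prefers to argue directly with $P_{L,S}\in\Xi$ --- which the shuffle-permutation notation introduced just above the theorem suggests --- the same scheme works, but one must in addition expand $dL=dL_1+dL_2$ in \rref{PLSDefinition}, track the shuffle signs, and check that the mixed terms pairing $P_{L_1}$ against $dL_2$ (and vice versa) either vanish in $T$ or recombine, after one adds further $\Xi$-multiples of $P_{L_i,S'}$ supplied by the relations $dL_i$, into genuine $P_{L_i,S_i}$'s. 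That bookkeeping --- the analogue of reducing $\mathcal{S}_3$-type elements to $\mathcal{S}_1/\mathcal{S}_2/\mathcal{S}_3$-combinations in the proof of Lemma~\ref{lemmafort30grobner} --- is the main obstacle in the direct variant.
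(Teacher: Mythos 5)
Your overall strategy --- reduce, via Theorem \ref{t30}, to showing $P_{L,S}\in(P_{L',S'}\mid L'\ \text{minimal})$ for every $L$ and $S$, induct on $|L|$, and drive the induction with a decomposition $L=L_1+L_2$, $|L_1|,|L_2|\subsetneq|L|$, together with the identity $P_L=t_{|L|\smallsetminus|L_1|}P_{L_1}+t_{|L|\smallsetminus|L_2|}P_{L_2}$ --- is exactly the paper's, and your circuit construction of $(L_1,L_2)$, which guarantees $|L_1|\cup|L_2|=|L|$ (needed for that identity to hold on the nose), correctly justifies what the paper leaves implicit. Where you diverge is in the odd part. The paper stays in $\Xi$ and writes an exact identity $P_{L,S}=P'_{L,S}+\sum q_i+\sum r_j$, where $P'_{L,S}$ is the expected signed combination of $u,t$-multiples of $P_{L_1,S_1}$ and $P_{L_2,S_2}$, and the error terms $q_i,r_j$ are multiples of $Q_{L_j,S_j}:=t_{|L_j|}\,dL_j\,dz_{S_j}$; Lemma \ref{ll1} then places each $Q_{L_j,S_j}$ in $(P_{L_j,T}\mid T\subseteq|L_j|)$, so the induction closes. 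You instead work in $\Phi_r$, where your two-term rewriting of an $\mathcal{S}_3$-generator is clean and sign-free (I checked the $t$-monomial bookkeeping; it is correct, and the resulting elements are indeed in $\mathcal{S}_3$ for $L_1$, $L_2$ with the same form $\omega$).

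The gap is in the final step, ``the same passage that deduced Theorem \ref{t30} \dots now yields'' the claim. The map $\Xi_r\to\Phi_r$, $u_I\mapsto t_I e_I$, is not injective: for $dz_1+dz_2+dz_3=0$ the element $t_2t_3u_1+t_1t_3u_2+t_1t_2u_3=t_{|L|}\,dL=Q_{L,\emptyset}$ maps to $0$. Consequently the equality $\langle\mathcal{S}_3\rangle=\langle\mathcal{S}_3^{\min}\rangle$ in $\Phi_r$ only gives $P_{L,S}\in(P_{L',S'}\mid L'\ \text{minimal})+\ker(\Xi_r\to\Phi_r)$, and invoking Theorem \ref{t30} to say $\ker(\Xi_r\to\Phi_r)\subseteq K=(P_{L,S}\mid\text{all}\ L,S)$ is circular: reducing that ideal to the minimal one is exactly what is being proved. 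What is missing is the observation that the kernel elements actually produced when your $\Phi_r$-identity is lifted to $\Xi$ are precisely the $dL_1$- and $dL_2$-correction terms, i.e.\ $u,t$-multiples of $Q_{L_1,\ast}$ and $Q_{L_2,\ast}$, together with Lemma \ref{ll1}, which rewrites these in terms of $P_{L_j,T}$ for the \emph{same} $L_j$ and hence feeds them into your induction hypothesis. Your closing ``direct variant'' paragraph describes this repair almost verbatim; it is not an optional alternative but the step that actually closes the argument.
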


Let $L$ be as in \rref{a1}.
Recalling the notation of \rref{eee1}, let $ S\subseteq |L|$. Put $$ Q_{L,S}:=t_{|L|}dLdz_S.$$
So obviously, $Q_{L,S}\in K$.

\begin{lemma} \label{ll1}
$Q_{L,S}\in (P_{L,T}|T\subseteq|L|)$ \end{lemma}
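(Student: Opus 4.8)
The goal is to show that the element $Q_{L,S}=t_{|L|}\,dL\,dz_S$ lies in the ideal generated by the $P_{L,T}$ with $T\subseteq|L|$, \emph{for the same relation $L$}. The natural strategy is induction on $|S|$. For $S=\emptyset$ we have $Q_{L,\emptyset}=t_{|L|}\,dL$, and expanding $dL=\sum_{j\in|L|}a_j\,dz_j$ shows each summand $a_j\,t_{|L|}\,dz_j = a_j\,t_{|L|\smallsetminus\{j\}}\,(t_j\,dz_j) = a_j\,t_{|L|\smallsetminus\{j\}}\,u_j$, so $Q_{L,\emptyset}$ is a $\Xi$-linear combination of the $u_j$'s with polynomial coefficients; one then checks directly that this equals $P_{L,\{j\}}$-type combinations, or more simply observes that $Q_{L,\emptyset}=\sum_j a_j t_{|L|\smallsetminus\{j\}} P_{L,\{j\}}$ modulo lower terms — in fact the cleanest base case is to verify $Q_{L,\{i\}}$ directly against the defining formula \rref{PLSDefinition}.

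\textbf{Inductive step.} Assume $Q_{L,T'}\in(P_{L,T}\mid T\subseteq|L|)$ for all $T'$ with $|T'|<|S|$. Write $S=\{j_1<\dots<j_l\}$. The defining equation \rref{PLSDefinition} rearranges to
$$P_L\,dz_{j_1}\dots dz_{j_l} = P_{L,S} + \sum_{s=1}^l t_{|L|\smallsetminus\{j_s\}\smallsetminus\{\text{the rest of }S\}}\;(\pm)\;dz_{j_1}\dots\widehat{dz_{j_s}}\dots dz_{j_l}\,dL,$$
so it suffices to handle $P_L\,dz_S$ and each term $t_{\bullet}\,dz_{S\smallsetminus\{j_s\}}\,dL$ separately. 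The key identity is that $P_L\,dz_S$ and $Q_{L,S}=t_{|L|}\,dL\,dz_S$ differ by an element of the target ideal: indeed, expanding $dL$ in $Q_{L,S}$ gives $\sum_j a_j\,t_{|L|}\,dz_j\,dz_S$, and for $j\in S$ this vanishes (repeated $dz_j$), while for $j\notin S$ we get $a_j\,t_{|L|\smallsetminus\{j\}}\,u_j\,dz_S$ up to sign; comparing with $P_L\,dz_S=\bigl(\sum_j a_j t_{|L|\smallsetminus\{j\}}\bigr)dz_S$ shows the difference is a sum of terms $t_{|L|\smallsetminus\{j\}}(u_j - t_j\,dz_j)dz_S$, which we must re-express. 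The honest way is to manipulate inside the localized ring $t_1^{-1}\dots t_m^{-1}\Xi$, where $dL=0$ is a genuine relation and everything is transparent, and then track that the manipulations stay in $\Xi$ and only invoke $P_{L,T}$'s; the remark after Lemma \ref{lemmafort30grobner} (``$P_L t_{I\smallsetminus|L|}e_I = t_{I\smallsetminus|L|}P_{L,I}$'') is exactly the template for this bookkeeping, applied with $I=S$ enlarged by the extra $t$-factors.

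\textbf{Main obstacle.} The routine part is the algebra of expanding $dL$ and recognizing $u_j=t_j\,dz_j$; the genuinely delicate part is the combinatorial bookkeeping of which $t$-monomial multiples appear, so that at the end of the induction one has expressed $Q_{L,S}$ as an \emph{honest} $\Xi$-linear combination $\sum_{T\subseteq|L|} c_T\,P_{L,T}$ rather than a combination in the localization. Concretely, each application of the relation $dL$ to eliminate a $dz_j$ introduces a $t_j$ in the numerator (cf.\ the passage $t_2t_3dz_2=-t_1t_3(dz_1+dz_3)=u_1t_3+u_3t_1$ in the Example), and one must verify that the $t$-monomials already present in $Q_{L,S}$ — namely $t_{|L|}$ — are always divisible enough for this to land in $\Xi$ with the $P_{L,T}$ appearing with polynomial (not Laurent) coefficients. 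I expect this to follow because $t_{|L|}$ contains \emph{every} $t_j$ with $j\in|L|$, so eliminating any $dz_j$, $j\in|L|$, is always ``paid for'' by the corresponding $t_j$ already in hand; making this precise, by downward induction on $|S|$ peeling off one element of $S$ at a time and citing the inductive hypothesis for the resulting $Q_{L,S\smallsetminus\{j_s\}}$ together with the defining formula \rref{PLSDefinition} for $P_{L,S}$ itself, completes the proof.
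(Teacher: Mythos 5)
Your proposal is a plan rather than a proof, and the plan as written does not close. Two concrete problems. First, the induction on $|S|$ does not reduce to its own hypothesis: rearranging \rref{PLSDefinition} produces the terms $t_{|L|\smallsetminus\{j_s\}}\, dz_{j_1}\cdots \widehat{dz_{j_s}}\cdots dz_{j_l}\, dL$, whose $t$-monomial is $t_{|L|\smallsetminus\{j_s\}}$, whereas $Q_{L,S\smallsetminus\{j_s\}}=t_{|L|}\, dL\, dz_{S\smallsetminus\{j_s\}}$ carries the full $t_{|L|}$. The terms you must absorb are therefore ($\pm$ reorderings of) $Q_{L,S\smallsetminus\{j_s\}}/t_{j_s}$ --- divisors, not multiples, of the smaller $Q$'s --- and membership of $Q_{L,T}$ in the ideal does not pass to $Q_{L,T}/t_{j_s}$. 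Second, the base case is not established: the identity you venture, $Q_{L,\emptyset}=\sum_j a_j t_{|L|\smallsetminus\{j\}}P_{L,\{j\}}$ ``modulo lower terms,'' cannot hold, since for $L=z_1+z_2+z_3$ the left side is homogeneous of $t$-degree $2$ and the right side of $t$-degree $3$; and the expression $t_{|L|\smallsetminus\{j\}}(u_j-t_j\,dz_j)\,dz_S$ is vacuous because $u_j=t_j\,dz_j$ by definition. Deferring the computation to the localized ring and promising to ``track that the manipulations stay in $\Xi$'' is precisely the content of the lemma, not a proof of it.

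What you are missing is that no induction is needed. For $S\neq\emptyset$, pick $i\in S$ and multiply $P_{L,S}$ by $u_i=t_i\,dz_i$: the term $P_L\,dz_S$ dies because $dz_i\wedge dz_S=0$, as does every correction term with $j_s\neq i$ (each still contains the factor $dz_i$); the single surviving term, where $dL$ sits in the slot of $dz_i$, gives after a sign check the exact identity $Q_{L,S}=u_i\,P_{L,S}$, which finishes that case in one line. For $S=\emptyset$ one writes the explicit two-term combination $Q_{L,\emptyset}=u_{i_1}P_L-t_{i_1}P_{L,\{i_1\}}$, again verified directly from \rref{PLSDefinition}. Your observation that $t_{|L|}$ ``pays for'' every elimination of a $dz_j$ is the right intuition, but it is cashed in by these closed formulas, not by the proposed induction.
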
 
\begin{proof}
If $S\neq\emptyset$, let $i\in S$. 
Then $i \in S \subseteq |L|$, and hence,
$t_i | t_{|L|}$ and $dz_i | dz_S$.
Thus, $u_i = t_i dz_i$ must divide $t_{|L|} dL dz_S = Q_{L,S}$.
Then $Q_{L,S}=u_iP_{L,S}$.
On the other hand, by applying \rref{PLSDefinition}, we have, by definition,
$$u_{i_1} P_L -t_{i_1} P_{L, \{ i_1\}} = u_{i_1} P_L - t_i P_L dz_i + t_{i_1}  \dots  t_{i_\ell}dL= t_{|L|} dL = 
Q_{L,{\emptyset}}.
$$
In the first summand the surviving term is the term of
$P_{L,\emptyset}$ which omits $t_{i_1}$. 
In the second summand the surviving terms are the ``error terms" of the summand of $P_{L}$ which omits $t_{i_1}$. All 
remaining terms cancel. 
\end{proof}

\begin{proof}[Proof of Theorem \ref{et6t}:]
Even case :
Suppose we know $$L_1+L_2=L$$
$$|L_1|,|L_2|\subsetneq |L|.$$
Then $$P_L=t_{L\setminus {L_1}}P_{L_1}+t_{L\setminus {L_2}}P_{L_2}.$$   

Odd case :
If L is not minimal we know $L_1+L_2=L$ and $|L_1|,|L_2|\subsetneq |L|$. Based on the even case, the first guess for $P_{L,S}$ could be
\[\begin{split} &P_{L,S}':= P_{{L_1},{S_1}}u_{S\setminus{S_1}}t_{|L|\setminus{(|{L_1}|{\cup S})}}sign(\sigma_{S,S\setminus S_1})
\\&+P_{{L_2},{S_2}}u_{S\setminus S_2} t_{|L|\setminus(|L_2|\cup S)}sign(\sigma_{S_2,S\setminus S_2}),\end{split}\]
for 
$$S_1=S\cap |L_1|,\; S_2=S\cap |L_2|.$$
The terms that match are those when we omit $t_i$
from $t_L$ with $i\in |L|\setminus S$ or $i\in |L_1|\cap|L_2|\cap S$. The terms which do not match are 
for $i\in (|L_1|\cap S)\setminus|L_2|$ or $(|L_2|\cap S)\setminus|L_1|$. For $i\in (|L_1|\cap S)\setminus|L_2|$, the term 
missing in our first guess is 
$$q_i:=u_{S\setminus S_2\setminus\{i\}}Q_{{L_2},{S_2}}t_{|L|\setminus(|{L_2}|\cup 
S)}sign(\sigma_{S\setminus S_2\setminus\{i\},\{i\}})sign(\sigma_{ S\setminus S_2,S_2}).$$ 
Symmetrically, denote the missing term by $r_j$ for $j \in |L_2|\cap S\setminus|L_1|$. 
Thus, we have
$$P_{L,S} = P_{L,S}' + \sum_{i \in S\smallsetminus S_2} q_i + \sum_{j \in S\smallsetminus S_1} r_j.$$
Use Lemma \ref{ll1}.
\end{proof}

\section{The geometric interpretation}\label{s5}

Since the well known paper by W. Fulton and R. MacPherson \cite{ful}, compactifications of configuration spaces, and 
complements of hyperplane arrangements \cite{DC}, became an important topic of algebraic geometry. For a good survey, see
\cite{den}. Our geometric interpretation is related to a compactification known as the {\em reciprocal plane} \cite{den},
Section 5.1, and its super analog.

Let us assume the $z_j$'s linearly span the vector space $\mathbb{A}^n_F$ (otherwise, we can replace $x_1,\dots, x_n$ by a basis of
the span of $z_1,\dots,z_m$).
Denote 
$$\calA=\{z_1,\dots,z_m\}, \calA_S=\{z_i|i\in S\}.$$
Let $R_{\calA,\mathbb{A}_F^n}=F[t_1,\dots,t_m]/I$
(see Theorem \ref{t1}). 
We can then similarly write $R_{\mathcal{A},W}$ where $\mathcal{A}$ is a set of vectors spanning the dual of an $F$-vector space $W$.
A stratification of $\text{Spec} (R_{\calA,\mathbb{A}^n_F})$ can be described as follows.
Recall that we have a canonical embedding
\beg{eq0}{\mathbb{A}^n_F\setminus Z(z_1\dots z_m)\subseteq \text{Spec}(R_{\calA,\mathbb{A}^n_F}).}
Call a vector subspace $V\subseteq\mathbb{A}_F^n$ {\em special} if 
$V=Z(\calA_S)$ for some $S\subseteq\{1,\dots,m\}$. (Note: $S$ can be empty.) Put also
$$S_V=\{i\in \{1,\dots,m\}|V\subseteq Z(z_i)\}.$$
(Note \cite{den} that the sets of $i$'s for which the $z_i$'s are linearly independent are the independent sets of a matroid.
Then the sets $S_V$ are precisely what is called the {\em flats} of this matroid.) 
For a scheme $X$, denote by $|X|$ the underlying topological space. 
\vspace{3mm}

\begin{theorem} (\cite{foot}, Remark 6)
\label{tt0}
For $V\subseteq\mathbb{A}_F^n$ special, there is a canonical embedding
\beg{eq1}{\text{\em{Spec}}(R_{\calA_{S_V},\mathbb{A}_F^n/V}) \rightarrow \text{\em{Spec}}(R_{\calA,\mathbb{A}_F^n}). }

Composing (\ref{eq1}) with 
$$\mathbb{A}^n_F/V\setminus \bigcup_{i\in S} Z(z_i)\subseteq \text{\em{Spec}}(R_{\calA_{S_V},\mathbb{A}^n_F/V}),$$
(see (\ref{eq0})), induces a decomposition of sets (not topological spaces),

\beg{eq2}{|\text{\em{Spec}}(R_{\calA,\mathbb{A}_F^n})|=\coprod_{V\subseteq\mathbb{A}_F^n\text{ special}}|(\mathbb{A}_F^n/V)
\setminus\bigcup_{i\in S_V}Z(z_i)|.}
\end{theorem}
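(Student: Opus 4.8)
The plan is to reduce everything to Theorem \ref{t1}, which describes $R_{\calA,\mathbb{A}_F^n}$ and $R_{\calA_{S_V},\mathbb{A}_F^n/V}$ explicitly by generators and relations, and then to analyze the $F$-points (more precisely, the points) of these $\mathrm{Spec}$'s directly. First I would construct the embedding \rref{eq1}: on coordinate rings, send the generator $t_i$ of $R_{\calA,\mathbb{A}_F^n}$ (for $i\in S_V$) to the corresponding generator $t_i$ of $R_{\calA_{S_V},\mathbb{A}_F^n/V}$, and send $t_i$ for $i\notin S_V$ to $0$. I must check this respects the defining ideals: a relation polynomial $P_L$ for $L=\sum a_j z_{i_j}=0$ becomes, after setting $t_i=0$ for $i\notin S_V$, a polynomial in the remaining $t_i$'s. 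If all $i_j\in S_V$ then $L$ restricts to a relation among $\calA_{S_V}$ on $\mathbb{A}_F^n/V$ (since $z_{i_j}$ vanishes on $V$) and $P_L$ maps to the corresponding $P_L$ over $\mathbb{A}_F^n/V$; if some $i_j\notin S_V$, one checks every monomial of $P_L$ that survives the substitution still contains a variable $t_i$ with $i\notin S_V$ — indeed each monomial of $P_L$ omits exactly one index, so if at least two indices lie outside $S_V$ the whole thing dies, and if exactly one index $i_0\notin S_V$ then only the monomial omitting $t_{i_0}$ survives, but one must argue this monomial is itself in the target ideal; the cleanest route is to observe $z_{i_0}$ restricted to $\mathbb{A}_F^n/V$ is a nonzero $F$-linear combination of the $z_{i_j}$, $i_j\in S_V$ (because $S_V$ is a flat, the flat condition says precisely that $z_{i_0}\in\mathrm{span}\{z_{i_j}: i_j\in S_V\}$ whenever $V\subseteq Z(z_{i_0})$), giving a relation $L'$ among $\calA_{S_V}$ whose $P_{L'}$ is exactly the surviving monomial. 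Surjectivity of the induced map on coordinate rings is clear since the $t_i$, $i\in S_V$, generate, so \rref{eq1} is a closed embedding.

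Next I would set up the decomposition \rref{eq2}. For this, compose \rref{eq1} with the open embedding $\mathbb{A}_F^n/V\setminus\bigcup_{i\in S_V}Z(z_i)\subseteq\mathrm{Spec}(R_{\calA_{S_V},\mathbb{A}_F^n/V})$, which is legitimate by \rref{eq0} applied to the arrangement $\calA_{S_V}$ on $\mathbb{A}_F^n/V$ (note every $z_i$, $i\in S_V$, is nonzero on $\mathbb{A}_F^n/V$, so defines an honest hyperplane there). This gives, for each special $V$, a locally closed subset of $|\mathrm{Spec}(R_{\calA,\mathbb{A}_F^n})|$. It remains to show these subsets partition the whole space. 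I would argue as follows: a point of $|\mathrm{Spec}(R_{\calA,\mathbb{A}_F^n})|$ is a prime $\mathfrak{p}$ of $F[t_1,\dots,t_m]/I$; let $S=S(\mathfrak{p})=\{i: t_i\notin\mathfrak{p}\}$ be the set of indices on which the coordinate is nonzero. The key claim is that $S$ is always a flat, i.e. $S=S_V$ for $V=Z(\calA_S)$; granting this, the point lies in the image of \rref{eq1} for that $V$ (the ideal of the image is exactly $(t_i: i\notin S_V)$, which is contained in $\mathfrak{p}$), and within that copy it lies in the open locus where all $t_i$, $i\in S_V$, are nonzero, which is the open part we singled out; conversely, the image of the open part for a given $V$ consists exactly of those $\mathfrak{p}$ with $S(\mathfrak{p})=S_V$, so distinct $V$'s give disjoint images. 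To see $S(\mathfrak{p})$ is a flat: if $i_0\notin S$ but $V=Z(\calA_S)\subseteq Z(z_{i_0})$, flatness gives a relation $L: z_{i_0}=\sum_{j\in S}a_j z_{i_j}$, hence $P_L\in I\subseteq\mathfrak{p}$; the monomial of $P_L$ omitting $t_{i_0}$ is $t_{i_0}^{-1}$-free and equals $\pm\prod_{j\in S}t_{i_j}\cdot(\text{stuff})$ up to the other monomials which all contain $t_{i_0}\in\mathfrak{p}$, forcing $\prod_{j}t_{i_j}\in\mathfrak{p}$, contradicting $i_j\in S$ and $\mathfrak{p}$ prime. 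Conversely if $i_0\in S$ then trivially $V\subseteq Z(z_{i_0})$. This pins down $S(\mathfrak{p})$ as the flat $S_V$.

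I expect the main obstacle to be the bookkeeping in the flat/relation argument — precisely, verifying that setting $t_i=0$ for $i\notin S_V$ carries the ideal $I$ into the analogous ideal over $\mathbb{A}_F^n/V$, and dually that the set $S(\mathfrak{p})$ is a flat, since both hinge on the same combinatorial fact about how the polynomials $P_L$ interact with the matroid structure (a monomial of $P_L$ omitting one index survives a coordinate-vanishing substitution exactly when that one omitted index is the unique ``bad'' one, and then it is accounted for by a shorter relation coming from flatness). Once that is in place, the disjointness and exhaustiveness in \rref{eq2} are formal, and the fact that \rref{eq2} is only a decomposition of \emph{sets} rather than of topological spaces is automatic, since the pieces are locally closed but their closures involve the lower strata. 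I would also remark that this recovers \cite{foot}, Remark 6, and is compatible with the matroid stratification of the reciprocal plane described there.
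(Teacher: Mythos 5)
Your overall strategy coincides with the paper's: realize \rref{eq1} as the quotient of $R_{\calA,\mathbb{A}_F^n}$ by $(t_i \mid i\notin S_V)$, and prove \rref{eq2} by showing that the non-vanishing set $S(\mathfrak{p})=\{i \mid t_i\notin\mathfrak{p}\}$ of any prime is a flat, using the monomial of $P_L$ omitting the ``bad'' index together with primality of $\mathfrak{p}$. The second half of your argument (disjointness, and surjectivity of the union of strata) matches the paper's proof essentially verbatim and is correct.

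There is, however, one step in your well-definedness check that fails as written. In the sub-case where a relation $L$ has exactly one index $i_0\notin S_V$, you claim the surviving monomial $\prod_{i_j\in S_V\cap|L|}t_{i_j}$ ``is itself in the target ideal,'' accounted for by a relation $L'$ among $\calA_{S_V}$ with $P_{L'}$ equal to that monomial. This cannot be right: such a monomial equals $z_{i_1}^{-1}\cdots z_{i_k}^{-1}\neq 0$ in the integral domain $R_{\calA_{S_V},\mathbb{A}_F^n/V}$, and no $P_{L'}$ is a single monomial since every genuine relation has at least two nonzero terms. The correct resolution --- which is exactly what the paper observes, and which follows from the flat condition you yourself quote --- is that this sub-case is vacuous: if $L$ had exactly one term $a_0z_{i_0}$ with $i_0\notin S_V$, then $z_{i_0}=-a_0^{-1}\sum_{i_j\in S_V}a_jz_{i_j}$ would vanish on $V$, forcing $i_0\in S_V$, a contradiction. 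Hence every $P_L$ with $|L|\not\subseteq S_V$ has at least two indices outside $S_V$, so every one of its monomials contains some $t_i$ with $i\notin S_V$ and the whole polynomial is killed by the substitution. With that one-line correction your proof goes through and agrees with the paper's.
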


\begin{proof}
We have 
$$R_{\calA,\mathbb{A}_F^n}/(t_i|i\notin S_V)=R_{\calA_S,\mathbb{A}_F^n/V},$$
which gives the maps (\ref{eq1}). 
(The point is that there is no linear relation between the $z_i$'s in which all but one term would have $i\in S_V$.
Thus, all the relations $P_L$ where $L$ contains a term not in $S_V$ are in $(t_i|i\notin S_V)$.)

To prove \rref{eq2}, first note that the images of the inclusions of the components of the right hand side of \rref{eq2} are clearly disjoint since they correspond to imposing relations $t_i$ with $i\notin S_V$ for some special vector subspace $V$, and inverting all other $t_i$'s.
Thus, our task is to show that the canonical map from the right hand side to the left hand side of \rref{eq2} is onto.
To this end, let $Q\in \text{Spec}(R_{\mathcal{A},\mathbb{A}_F^n})$ and let
$$S=\{j\in\{1,\dots,m\}|Q\in (t_j)\}.$$
Let 
$$V=\bigcap_{j\in S}Z(z_j).$$
We want to prove that $S=S_V$.
The fact that $S\subseteq S_V$ is automatic. Suppose $j\in S_V\setminus S$.
Then $z_j=a_1z_{j_1}+\dots a_kz_{j_k}$ with $j_1<\dots<j_k\in S,$ $a_1,\dots, a_k\neq 0\in F$.
Let 
$$L=z_j-a_1z_{j_1}-\dots - a_kz_{j_k}.$$
By assumption, $Q\in (t_j)$. But in $R_{\mathcal{A},\mathbb{A}_F^n}/(t_j)$,
$P_L$ is a non-zero multiple of 
$$t_{j_1}\cdot \dots \cdot t_{j_k}.$$
This implies $Q\in (t_{j_i})$ for some $i=1,\dots,k$. Contradiction.
\end{proof}

Theorem \ref{tt0} suggests that $\text{Spec}(R_{\mathcal{A},\mathbb{A}_F^n})$ should have a compactification where on the right hand
side of (\ref{eq2}) we replace each
$$(\mathbb{A}_F^n/V) \setminus \bigcup_{i\in S_V}Z(z_i)$$
with the corresponding affine space $(\mathbb{A}_F^ n/V)$.
In fact, there is such a compactification $X_{\mathbb{A}_F^n,\calA}$ and it can be described as the 
Zariski closure of the image of the embedding
\beg{eq3}{\mathbb{A}_F^n\setminus Z(z_1\dots z_m) \overset{(z_1,\dots,z_m)}
\longrightarrow \prod_{i=1}^m\mathbb{P}_F^1.}
In the terminology of \cite{den}, this is an example of what is called a {\em toric compactification}. It was also
studied, from a different point of view, in \cite{ab}.
Note that while (\ref{eq3}) resembles superficially the formula for the 
De Concini-Procesi wonderful compactification \cite{DC}, (\ref{eq3}) is in fact quite different. While the wonderful
compactification uses projections to (typically) higher-dimensional projective spaces, (\ref{eq3}) uses inclusions of the affine
coordinates $z_i$ into $\mathbb{P}_F^1$.

\vspace{3mm}
The projective variety $X_{\mathbb{A}_F^n,\calA}$
is covered by a system of affine open sets, closed under intersection,
$$U_{V,T}=\text{Spec}\prod_{j\in T}z_j^{-1} F[t_i,z_j|i\notin S_V,j\in S_V]/(\frac{P_L}{t_{S_V\cap|L|}})$$
where $V$ runs through special subspaces of $\mathbb{A}_F^n$, $L$ runs through all linear relations among the 
$z_i$'s, and $T$ is any subset of $S_V$.
The following fact follows from the definitions:
\begin{lemma}
We have
$$U_{V,T}\bigcap U_{V',T'}=U_{W,T\cup T'\cup(S_V-S_{V'})\cup(S_{V'}-S_V)}$$
where
$$V+V'\subseteq W=\bigcap_{i\in S_V\cap S_{V'}}Z(z_i)$$
so
$$S_V\bigcap S_{V'}=S_W.$$
\end{lemma}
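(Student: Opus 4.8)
The plan is to realize each $U_{V,T}$ as the trace on $X_{\mathbb{A}_F^n,\calA}$ of a standard affine chart of the ambient product $\prod_{i=1}^{m}\mathbb{P}^1_F$ and then compare charts one $\mathbb{P}^1$-factor at a time. On the $i$-th copy of $\mathbb{P}^1_F$, with ``affine coordinate'' the value of $z_i$ in one chart and $t_i=z_i^{-1}$ in the other, write $A_i=\{z_i\neq\infty\}=\mathrm{Spec}\,F[z_i]$, $B_i=\{z_i\neq 0\}=\mathrm{Spec}\,F[t_i]$, and $A_i\cap B_i=\mathrm{Spec}\,F[z_i^{\pm1}]=\mathrm{Spec}\,F[t_i^{\pm1}]$. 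For $S,T\subseteq\{1,\dots,m\}$ let $C_{S,T}\subseteq\prod_i\mathbb{P}^1_F$ be the affine open whose $i$-th factor is $A_i\cap B_i$ for $i\in T$, is $A_i$ for $i\in S\smallsetminus T$, and is $B_i$ for $i\notin S\cup T$. Unwinding the definition, the ring $\prod_{j\in T}z_j^{-1}F[t_i,z_j\mid i\notin S_V,\ j\in S_V]$ is exactly the coordinate ring of $C_{S_V,T}$ --- with the convention that, for $j\in T\smallsetminus S_V$, ``inverting $z_j$'' means passing to $F[t_j^{\pm1}]=F[z_j^{\pm1}]$, i.e. using $A_j\cap B_j$ --- and dividing by the relations $P_L/t_{S_V\cap|L|}$ merely records the induced closed-subscheme structure which is how the covering was defined; thus $U_{V,T}=X_{\mathbb{A}_F^n,\calA}\cap C_{S_V,T}$ as schemes. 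Since $X_{\mathbb{A}_F^n,\calA}\hookrightarrow\prod_i\mathbb{P}^1_F$ is a closed immersion, $(X_{\mathbb{A}_F^n,\calA}\cap C)\cap(X_{\mathbb{A}_F^n,\calA}\cap C')=X_{\mathbb{A}_F^n,\calA}\cap(C\cap C')$ for all opens $C,C'$ of the product, so everything reduces to identifying $C_{S_V,T}\cap C_{S_{V'},T'}$.

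First I would dispose of the matroid bookkeeping. Put $W=\bigcap_{i\in S_V\cap S_{V'}}Z(z_i)=Z(\calA_{S_V\cap S_{V'}})$; this is special by definition. Each $Z(z_i)$ is a linear subspace, and $i\in S_V\cap S_{V'}$ forces $V\subseteq Z(z_i)$ and $V'\subseteq Z(z_i)$, hence $V+V'\subseteq W$; in particular $V\subseteq W$ and $V'\subseteq W$. For the identity $S_W=S_V\cap S_{V'}$: the inclusion $\supseteq$ holds because each such $Z(z_i)$ is one of the subspaces cut to form $W$; conversely $W\subseteq Z(z_i)$ gives $V\subseteq W\subseteq Z(z_i)$ and $V'\subseteq W\subseteq Z(z_i)$, i.e. $i\in S_V\cap S_{V'}$.

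The core step is the factor-by-factor comparison of $C_{S_V,T}\cap C_{S_{V'},T'}$ with $C_{S_W,T''}$, where $T''=T\cup T'\cup(S_V\smallsetminus S_{V'})\cup(S_{V'}\smallsetminus S_V)$. Fixing $i$, the $i$-th factor of the intersection is the intersection of the two chosen factors among $\{A_i,B_i,A_i\cap B_i\}$; going through the exhaustive cases ($i\in S_V\cap S_{V'}$ with $i\notin T\cup T'$; $i\in S_V\cap S_{V'}$ with $i\in T\cup T'$; $i$ in exactly one of $S_V,S_{V'}$; $i\notin S_V\cup S_{V'}$) and using $S_W=S_V\cap S_{V'}$ together with $T\subseteq S_V$ and $T'\subseteq S_{V'}$, one finds this factor is $A_i$ precisely when $i\in S_W\smallsetminus T''$, is $B_i$ precisely when $i\notin S_W\cup T''$, and is $A_i\cap B_i$ precisely when $i\in T''$; that is, it is the $i$-th factor of $C_{S_W,T''}$. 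Hence $C_{S_V,T}\cap C_{S_{V'},T'}=C_{S_W,T''}$, and intersecting with $X_{\mathbb{A}_F^n,\calA}$ yields $U_{V,T}\cap U_{V',T'}=U_{W,T''}$.

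I expect the only real friction to be notational: $T''$ is in general not contained in $S_W$ (it contains $S_V\smallsetminus S_{V'}$ and $S_{V'}\smallsetminus S_V$, which are disjoint from $S_W$), so $U_{W,T''}$ must be read in the extended sense above --- for $j\in T''\smallsetminus S_W$, ``invert $z_j$'' means localize at $t_j$, equivalently use $A_j\cap B_j$ on the $j$-th factor --- and one must check that, with this reading, the displayed ring formula still presents $X_{\mathbb{A}_F^n,\calA}\cap C_{S_W,T''}$ (the relations $P_L/t_{S_W\cap|L|}$ are unaffected, since the additional $t_j$ become units). Once these conventions are pinned down, the case analysis and the matroid identities are entirely routine.
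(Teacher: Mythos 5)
Your proposal is correct, and since the paper offers no argument here beyond ``follows from the definitions,'' it supplies exactly the chart-by-chart verification in $\prod_{i}\mathbb{P}^1_F$ that the author evidently has in mind; your case analysis and the matroid identities ($W$ special, $V+V'\subseteq W$, $S_W=S_V\cap S_{V'}$) all check out, and you are right that the lemma only parses if one extends the notation $U_{W,T''}$ to $T''\not\subseteq S_W$ by reading ``invert $z_j$'' as ``localize at $t_j$'' for $j\in T''\smallsetminus S_W$. The one point worth making explicit is that your identification of $U_{V,T}$ with the scheme-theoretic trace $X_{\mathbb{A}_F^n,\calA}\cap C_{S_V,T}$ (i.e.\ that the displayed ideal $(P_L/t_{S_V\cap|L|})$ is the full ideal of relations) is precisely the theorem the paper proves immediately \emph{after} this lemma; since that theorem's proof is independent of the lemma, there is no circularity, but you should either cite it or instead verify directly that localizing the two presentations at the relevant elements identifies the ideals.
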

\qed

It follows from Theorem \ref{tt0} that $|U_{V,T}|$ are open subsets covering 
$X_{\mathbb{A}^n_F,\calA}$.
To show the affine schemes $U_{V,T}$
are reduced (their coordinate rings 
have no nilpotent elements), we have the following
generalization of Theorem \ref{t1}:

\begin{theorem}
Let $V$ be a special subspace of $\mathbb{A}_F^n$.
The kernel of the homomorphism of rings 
$$F[t_i,z_j|i\notin S_V,j\in S_V]\rightarrow \prod_{i\notin S_V} z_i^{-1}F[z_1,\dots,z_m]/(\mathcal{Z}_V)$$
given by $t_i\mapsto z_i^{-1}$, where $\mathcal{Z}_V$ is the set of all linear relations among the $z_i$'s, $i\in S_V$, is 
$$(\frac{P_L}{t_{S_V\cap |L|}}).$$ 
\end{theorem}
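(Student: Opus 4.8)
\medskip\noindent\textbf{Proof proposal.}\ The plan is to deduce the theorem from Theorem~\ref{t1} by a localization argument, the one delicate point being a primality (equivalently, saturation) statement which I would settle by a Gr\"obner--basis computation parallel to \cite{foot} and to Lemma~\ref{lemmafort30grobner}. Write $B_V:=F[t_i,z_j\mid i\notin S_V,\ j\in S_V]$ for the source ring and $\psi_V$ for the homomorphism in the statement, in which the $z_i$ are the linear forms on $\mathbb{A}^n_F$ (which span, by the standing assumption of Section~\ref{s5}), so that its target equals $\prod_{i\notin S_V}z_i^{-1}F[x_1,\dots,x_n]$, the coordinate ring of $\mathbb{A}^n_F\smin\bigcup_{i\notin S_V}Z(z_i)$, in particular a domain. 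Put $K_V:=\ker\psi_V$ and $J_V:=\big(\tfrac{P_L}{t_{S_V\cap|L|}}\mid L\in\mathcal Z\big)\subseteq B_V$. Expanding $P_L=\sum_{i\in|L|}a_i\prod_{i'\in|L|\smin\{i\}}t_{i'}$ and dividing by $t_{S_V\cap|L|}$ as in the derivation of the $P_{L,S}$'s in Section~\ref{Statementoftheresults}, one finds, with $M:=|L|\smin S_V$,
$$\frac{P_L}{t_{S_V\cap|L|}}\;=\;\sum_{i\in M}a_i\,t_{M\smin\{i\}}\;+\;t_M\!\!\sum_{i\in S_V\cap|L|}\!\!a_i z_i\ \in\ B_V,$$
and applying $\psi_V$ and pulling out $\prod_{i\in M}z_i^{-1}$ gives $\psi_V\!\big(\tfrac{P_L}{t_{S_V\cap|L|}}\big)=\big(\prod_{i\in M}z_i^{-1}\big)L=0$ since $L$ is a genuine linear relation. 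So $J_V\subseteq K_V$, and the content of the theorem is the reverse inclusion.

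For $K_V\subseteq J_V$ I would first prove equality after inverting $z:=\prod_{j\in S_V}z_j$, and then descend. As $B_V$ is a polynomial ring and $z$ a product of variables, there is a canonical isomorphism $B_V[z^{-1}]\xrightarrow{\ \sim\ }F[t_1,\dots,t_m]\big[\,t_j^{-1}\mid j\in S_V\,\big]$ sending $t_i\mapsto t_i$, $z_j\mapsto t_j^{-1}$, under which each generator $\tfrac{P_L}{t_{S_V\cap|L|}}$ of $J_V$ goes, up to the unit $t_{S_V\cap|L|}$, to $P_L$, and under which $\psi_V[z^{-1}]$ becomes the localization of $h\colon F[t_1,\dots,t_m]\to R$, $t_i\mapsto z_i^{-1}$. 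Exactness of localization identifies $K_V^{\mathrm e}$ with the kernel of this localized map, which by Theorem~\ref{t1} (and exactness again) is the extension of $I=\ker h$, i.e. exactly $J_V^{\mathrm e}$. Hence $J_V^{\mathrm e}=K_V^{\mathrm e}$; equivalently $(B_V/J_V)[z^{-1}]\cong R_{\calA,\mathbb{A}_F^n}[\,t_j^{-1}\mid j\in S_V\,]$, a localization of the domain $R_{\calA,\mathbb{A}_F^n}$, and the finitely generated module $K_V/J_V$ is killed by a power of $z$.

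It remains to show $z=\prod_{j\in S_V}z_j$ is a nonzerodivisor modulo $J_V$; by the previous paragraph this is the same as saying $B_V/J_V$ is a domain, i.e. $J_V$ is prime. This is the crux, and the exact analogue here of the Proudfoot--Speyer statement (\cite{foot}, Theorem~2) that the $P_L$'s generate the prime ideal $I$; I expect the whole weight of the proof to sit here. The plan is the same Gr\"obner--basis method as in \cite{foot} and Lemma~\ref{lemmafort30grobner}: fix a TOP-type term order on $B_V$ with the $z_j$ below the $t_i$, so that $LT\!\big(\tfrac{P_L}{t_{S_V\cap|L|}}\big)=a_{j^\ast}\,t_{|L|\smin S_V}\,z_{j^\ast}$ for the appropriate $j^\ast\in S_V\cap|L|$ when $S_V\cap|L|\ne\emptyset$, and $=LT(P_L)$ otherwise; then verify the Buchberger criterion for the family $\{\tfrac{P_L}{t_{S_V\cap|L|}}\}$ by reducing all $S$-polynomials via the universal Gr\"obner--basis property of the $P_L$'s --- a direct and considerably simpler counterpart of the six-case analysis of Lemma~\ref{lemmafort30grobner} --- so that the family is a Gr\"obner basis of $J_V$; and finally check that $\psi_V$ is injective on the $F$-span of the standard monomials for this order. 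Since $\psi_V$ sends a monomial $t_I z_{I'}$ to the Laurent monomial $\prod_{i\in I}z_i^{-1}\prod_{j\in I'}z_j\in R$, that last point reduces to the matroid-combinatorial assertion (in the spirit of the broken-circuit computations of \cite{foot}) that these images are linearly independent over $F$. The two places where I expect genuine difficulty are the reductions of the ``mixed'' $S$-polynomials, involving generators that carry both the ``$t$-type'' summands $t_{M\smin\{i\}}$ and the ``$z$-type'' summands $t_M z_i$, and the enumeration of the standard monomials; the rest is formal.
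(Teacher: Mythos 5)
Your skeleton --- show $J_V=\bigl(\tfrac{P_L}{t_{S_V\cap|L|}}\bigr)\subseteq K_V=\ker\psi_V$, then invert $z=\prod_{j\in S_V}z_j$ to identify $B_V[z^{-1}]$ with $F[t_1,\dots,t_m][t_j^{-1}\mid j\in S_V]$ and deduce $J_V^{\mathrm e}=I^{\mathrm e}=K_V^{\mathrm e}$ from Theorem \ref{t1}, so that the theorem reduces to $z$ being a nonzerodivisor modulo $J_V$, equivalently to $J_V$ being prime --- is correct, and it is organized differently from the paper, whose entire proof is the remark that (by the proof of Theorem \ref{tt0}) any linear relation involving some $z_i$ with $i\notin S_V$ involves at least two such, so that ``the induction in Section \ref{s2}'' can be rerun with $\{1,\dots,m\}$ replaced by $\{1,\dots,m\}\smallsetminus S_V$; that is, the paper recomputes the kernel directly by the Gr\"obner method rather than localizing and descending. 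Your reduction buys a clean isolation of what is genuinely new beyond Theorem \ref{t1} (a single saturation/primality statement), and you are also right to read the target ring as $\prod_{i\notin S_V}z_i^{-1}F[x_1,\dots,x_n]$, i.e.\ to impose \emph{all} linear relations among the $z_i$: with only the relations supported on $S_V$ imposed, the image of $\tfrac{P_L}{t_{S_V\cap|L|}}$ is $\bigl(\prod_{i\in|L|\smallsetminus S_V}z_i^{-1}\bigr)\cdot L$, which is nonzero whenever $|L|\not\subseteq S_V$, so the literal statement would fail already at the containment $J_V\subseteq K_V$.

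The gap is that the step carrying all the weight --- primality of $J_V$, i.e.\ the Gr\"obner-basis/standard-monomial computation for the generators $\sum_{i\in M}a_i t_{M\smallsetminus\{i\}}+t_M\sum_{j\in S_V\cap|L|}a_j z_j$ with $M=|L|\smallsetminus S_V$ --- is only announced, not carried out, and you explicitly flag the mixed $S$-polynomials and the enumeration of standard monomials as unresolved. One concrete ingredient you should import from the paper, since it is exactly what makes the announced computation viable, is the dichotomy $|L|\subseteq S_V$ or $|M|\geq 2$: if $L$ involves a single index outside $S_V$, say $L=a_{i_0}z_{i_0}+\sum_{j\in S_V}a_jz_j$, then $z_{i_0}$ would vanish on $V$ and hence $i_0\in S_V$. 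Consequently every generator of $J_V$ is either an honest linear form in the $z_j$ (the case $M=\emptyset$, which just cuts $F[z_j\mid j\in S_V]$ down to the coordinate ring of $\mathbb{A}_F^n/V$) or has a ``$t$-part'' $\sum_{i\in M}a_it_{M\smallsetminus\{i\}}$ of degree $|M|-1\geq 1$, which is the shape needed for a Proudfoot--Speyer-type leading-term analysis; without excluding $|M|=1$ the mixed generators would acquire constant terms and your proposed term order and reduction scheme would degenerate. As written, your argument is a correct reduction plus a plausible plan for its crux; until the Buchberger verification and the linear independence of the images of the standard monomials are actually done (the paper defers this to ``repeating'' Section \ref{s2} and \cite{foot}), the proof is incomplete at precisely the point where the theorem's content lies.
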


\begin{proof}
Note that by the proof of Theorem \ref{tt0},
any linear relation among the $z_i$'s which involves a
$z_i$ for $i\notin S_V$ involves at least two of them.
Therefore, we can repeat the induction in Section \ref{s2} with
$\{1,\dots,m\}$ replaced by $\{1,\dots,m\}\setminus S_V$.
\end{proof}

We also have a similar analog of Theorem \ref{t30}:
\begin{theorem}
Let $V$ be a special subspace of $\mathbb{A}_F^n$.
The kernel of the homomorphism of rings
$$
\diagram 
F[t_i,z_j|i\notin S_V,j\in S_V]\otimes \Lambda[u_i,dz_j|i\notin S_V,j\in S_V] \dto \\
\prod_{i\notin S_V}z_i^{-1}F[z_1,\dots,z_m]\otimes \Lambda [dz_i, \dots, dz_m] /(\mathcal{Y}_V)
\enddiagram $$
given by $t_i \mapsto z_i^{-1}$, $u_i \mapsto z_i^{-1} dz_i$, where $\mathcal{Y}_V=\mathcal{Z}_V\cup \{dL|L\in \mathcal{Z}_V\}$, is 
$$ (\frac{P_{L, S}}{t_{S_V \cap |L|}} ) $$
where $L$ runs through the linear relations among the $z_i$'s and $S \subseteq |L|$. 
\end{theorem}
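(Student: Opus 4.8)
The plan is to mimic exactly the proof strategy used for Theorem \ref{t30}, namely the Gröbner basis argument of Lemma \ref{lemmafort30grobner}, but with the index set $\{1,\dots,m\}$ replaced throughout by $\{1,\dots,m\}\smallsetminus S_V$ and with the ``spanning'' generators $z_j$, $dz_j$ for $j\in S_V$ carried along as additional free variables. First I would set up the analogue of $\Psi_{\mathcal{A},\mathbb{A}_F^n}$: form the module
$$\Psi_V = \Big(\prod_{i\notin S_V} z_i^{-1} F[z_1,\dots,z_m]\big/(\mathcal{Z}_V)\Big)\otimes \Lambda[dz_1,\dots,dz_m]\big/(dL\mid L\in\mathcal{Z}_V),$$
graded by Grassmannian degree, and observe that, just as before, each graded piece is a free module over the base ring $\prod_{i\notin S_V} z_i^{-1} F[z_1,\dots,z_m]/(\mathcal{Z}_V)$ on the monomials $dz_{I'}$ where $I'$ ranges over $r$-subsets of a chosen basis-indexing set. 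The subring generated by the $t_i$, $u_i$ ($i\notin S_V$) and the $z_j$, $dz_j$ ($j\in S_V$) is then a graded submodule, and by the Homomorphism Theorem the kernel in question decomposes as a direct sum over Grassmannian degrees $r$ of intersections $N_r^V\cap P_r^V$ of two explicit submodules of a polynomial module on the variables $t_i$ ($i\notin S_V$), $z_j$ ($j\in S_V$).

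Next I would write down the three families $\mathcal{S}_1,\mathcal{S}_2,\mathcal{S}_3$ exactly as in \rref{Groebnerthreetermbasis}, but now over the ring $F[s,\{t_i\}_{i\notin S_V},\{z_j\}_{j\in S_V}]$ and using $P_L/t_{S_V\cap|L|}$ in place of $P_L$ (this division is legitimate precisely because, by the proof of Theorem \ref{tt0}, any linear relation $L$ involving some $z_i$ with $i\notin S_V$ in fact involves at least two such terms, so $t_{S_V\cap|L|}$ genuinely divides $P_L$ in $F[t_i\mid i\notin S_V]$ after specializing the ``frozen'' variables). I would then claim that the obvious six-case verification of the Buchberger criterion (Theorem \ref{eisenbudthm}) goes through verbatim: the key inputs are (a) that the polynomials $P_L/t_{S_V\cap|L|}$, for $L$ ranging over relations among the $z_i$ in $F[z_i\mid i\notin S_V]/(\mathcal{Z}_V)$, still form a universal Gröbner basis — which is exactly the content of the ``reduced'' Theorem just proved (the generalization of Theorem \ref{t1} to special subspaces), hence an instance of the Proudfoot–Speyer result applied to the matroid restricted away from the flat $S_V$; and (b) that the linear relations $dL$ for $L\in\mathcal{Z}_V$ expressing each $dz_i$ ($i\notin S_V$) in terms of a basis are the only ones imposed on the exterior generators, so the elimination step producing the ``error terms'' (the origin of $P_{L,S}$) works identically. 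The $z_j$, $dz_j$ for $j\in S_V$ are simply along for the ride: they commute with everything in the relevant way, are not inverted, and participate in no relation, so they never obstruct any leading-term matching or division argument.

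Concretely, I would organize the writeup as a single short proof that says: ``By the remarks above, the kernel is $\bigoplus_r (N_r^V\cap P_r^V)$; applying the intersection-via-extra-variable trick and Lemma \ref{lemmafort30grobner} with $\{1,\dots,m\}$ replaced by $\{1,\dots,m\}\smallsetminus S_V$ and $P_L$ replaced by $P_L/t_{S_V\cap|L|}$, one checks the six Buchberger cases exactly as before; the only nontrivial new point is that $P_L/t_{S_V\cap|L|}$ is still a polynomial and these still form a universal Gröbner basis, which is the previous Theorem. Hence the elements of $\mathcal{S}_3$ — that is, the $\bigwedge[u_i]$-multiples of the $P_{L,S}/t_{S_V\cap|L|}$ lying in degree $r$ — generate, as claimed.'' The main obstacle, and the one place I would want to be careful rather than just invoke ``the same argument,'' is verifying that the sufficiency of the normalization \rref{WOLOGconditionsS3s} (that we may assume $\min(|L|)\notin I_1\cup\dots\cup I_k$) still holds in the presence of the frozen variables $z_j$, $j\in S_V$: one must confirm that applying $dL$ to eliminate $dz_{\min(|L|)}$ produces terms each carrying a $t_{j'}$-factor with $j'\notin S_V$ (so that they land back in $\mathcal{S}_3$), which uses again that a relation $L$ touching the complement of $S_V$ touches it in at least two spots. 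Once that is in hand, Cases 1–6 transcribe with no further change, since at no point does the argument exploit that $S_V=\emptyset$.
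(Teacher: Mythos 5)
Your proposal matches the paper's intended argument: the paper states this theorem without a separate proof, relying on the same observation it records for the commutative predecessor — that any relation meeting the complement of $S_V$ meets it in at least two indices, so that one may repeat the Gr\"{o}bner-basis argument of Section \ref{s2} with $\{1,\dots,m\}$ replaced by $\{1,\dots,m\}\smallsetminus S_V$ and $P_{L,S}$ by $P_{L,S}/t_{S_V\cap|L|}$. Your write-up is in fact more explicit than the paper's on the delicate points (why $t_{S_V\cap|L|}$ divides, and why the normalization \rref{WOLOGconditionsS3s} survives the presence of the frozen variables $z_j$, $j\in S_V$), which is exactly where care is needed.
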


Accordingly, we have a superscheme
analog $\widetilde{X}_{\mathbb{A}_F^n,\calA}$ of $X_{\mathbb{A}_F^n,\calA}$.
Here by a superscheme, we mean a locally ringed space by
$\mathbb{Z}/2$-graded commutative
rings which is locally isomorphic to $\text{Spec}$ of a $\mathbb{Z}/2$-graded
commutative ring (see e.g. \cite{west}).
$\widetilde{X}_{\mathbb{A}_F^n,\calA}$ is covered by super-affine open subsets
$$\begin{array}{c}
\widetilde{U}_{V,T}=\text{Spec} \prod_{j\in T} z_j^{-1}F[t_i,z_j|i\notin S_V,j\in S_V]\\
\otimes \Lambda[u_i,dz_j||i\notin S_V,j\in S_V]/
(\frac{P_{L,S}}{t_{T\cap |L|}}). 
\end{array} $$

We clearly have
$$|\widetilde{U}_{V,T}|=|U_{V,T}|$$
and for $|U_{V',T'}|\subseteq|U_{V,T}|$, $\widetilde{U}_{V',T'}$ is a complement of the zero set of an
(even) principal ideal in $\widetilde{U}_{V,T}$. Therefore, $\widetilde{X}_{\mathbb{A}_F^n,\calA}$ can be defined 
as the colimit of the $\widetilde{U}_{V,T}$'s in the category of superschemes.

\vspace{10mm}

\end{document}